\numberwithin{equation}{section}
\newcommand{\RR}{\mathbb{R}}
\newcommand{\VV}{\mathbb{V}}
\newcommand{\QQ}{\mathbb{Q}}
\renewcommand{\epsilon}{\varepsilon}
\def\st{\ensuremath{s^*}}
\newcommand{\beq}{\begin{equation}}
\newcommand{\eeq}{\end{equation}}
\newcommand{\beqa}{\begin{equation} \begin{aligned}}
\newcommand{\eeqa}{\end{aligned} \end{equation}}
\newcommand{\beqas}{\begin{equation*} \begin{aligned}}
\newcommand{\eeqas}{\end{aligned} \end{equation*}}
\newcommand{\bit}{\begin{itemize}}
	\newcommand{\eit}{\end{itemize}}
\newcommand{\bmat}{\begin{bmatrix}}
	\newcommand{\emat}{\end{bmatrix}}
\theoremstyle{definition}\newtheorem{definition}{Definition}
\theoremstyle{remark}
\theoremstyle{remark}\newtheorem{remark}{Remark}
\theoremstyle{definition}\newtheorem{example}{Example}
\theoremstyle{plain}\newtheorem{question}{Question}
\theoremstyle{plain}\newtheorem{theorem}{Theorem}
\theoremstyle{plain}
\theoremstyle{plain}\newtheorem{proposition}{Proposition}
\theoremstyle{plain}\newtheorem{corollary}{Corollary}
\theoremstyle{plain}
\begin{document}
	
%

%

\title{Bi-$s^*$-concave distributions} 

\author[N. Laha]{Nilanjana Laha}
\address[N. Laha]{
Department of Statistics, Box 354322, University of Washington, Seattle, WA 98195-4322, USA.
}
\email{nlaha@uw.edu}

\author[J. A. Wellner]{Jon A. Wellner}
\address[J. A. Wellner]{
Department of Statistics, Box 354322, University of Washington, Seattle, WA 98195-4322, USA.
}
\email{jaw@stat.washington.edu}

\thanks{Supported in part by NSF Grant DMS-1566514. }

\keywords{log-concave, bi-log-concave, shape constraint, s-concave, 
quantile process, Cs\"org\H{o}-R\'ev\'esz condition, hazard function} 
\subjclass[2000]{60E15, 60F10}
\date{\today}

\maketitle

\begin{abstract}
We introduce a new shape-constrained class of distribution 
functions on $\RR$, the {\sl bi-$s^*$-concave} class.   In parallel to 
results of \cite{DuembgenKW:2017}  
for what they called the class of bi-log-concave distribution functions,
we show that every $s-$concave density $f$ has a bi-$s^*$-concave 
distribution function $F$ and that every bi-$s^*$-concave distribution function 
satisfies $\gamma (F) \le  1/(1+s)$  where finiteness of
$$
\gamma (F) \equiv \sup_{x} F(x) (1-F(x)) \frac{| f' (x)|}{f^2 (x)},
$$
 the Cs\"org\H{o} - R\'ev\'esz constant of $F$, plays an important role in the 
 theory of quantile processes on $\RR$. 
\end{abstract}

\tableofcontents

\section{Introduction:  the bi-log-concave class}
\label{sec:intro}

\cite{DuembgenKW:2017} 
investigated a shape constraint they called 
``bi-log-concavity'' for distribution functions $F$ on $\RR$:  a distribution function $F$ is 
{\sl bi-log-concave} if both $x \mapsto \log F(x) $ and $x \mapsto \log (1-F(x))$ are concave 
functions of $x$.  They noted 
that 
\cite{MR2213177} 
showed that any log-concave distribution with density $f$ 
has a bi-log-concave distribution function $F$, but that the inclusion is proper:  there are 
many bi-log-concave distributions that are not log-concave, and in fact bi-log-concave distributions
may not be unimodal.   
\cite{DuembgenKW:2017} 
proved the following interesting theorem characterizing 
the class of bi-log-concave distributions.
\medskip

First a bit of notation:  
$$
J(F) \equiv \{ x \in \RR : \ 0 < F(x) < 1 \} .
$$
A distribution function $F$ is non-degenerate if $J(F) \ne \emptyset $.
\smallskip
 
\begin{theorem}
\label{thm:DKWthm} (DKW, 2017)
For a non-degenerate distribution function $F$ the following 
four statements are equivalent:\\
(i) \ $F$ is bi-log-concave.\\
(ii) \ $F$ is continuous on $\RR$ and differentiable on $J(F)$ with derivative $f=F^{\prime}$ such that
\begin{eqnarray*}
F(x+t) \left \{ \begin{array}{l}  \le F(x) \exp \left ( \frac{f(x)}{F(x)} t \right ) \\ \ge 1 - (1-F(x)) \exp \left ( - \, \frac{f(x)}{(1-F(x))}  t \right ) 
\end{array} \right .
\end{eqnarray*}
for all $x \in J(F)$ and $t \in \RR$.\\
(iii) $F$ is continuous on $\RR$ and differentiable on $J(F)$ with derivative $f = F^{\prime}$ such that the 
hazard function $f/(1-F)$ is non-decreasing and reverse hazard function $f/F$ is non-increasing on $J(F)$.\\
(iv) $F$ is continuous on $\RR$ and differentiable on $J(F)$ with bounded 
and strictly positive derivative $f = F^{\prime}$.  Furthermore, $f$ is locally Lipschitz-continuous on $J(F)$ with 
$L^1-$derivative  $f^{\prime} = F^{\prime \prime} $ satisfying 
\begin{eqnarray*}
\frac{-f^2}{1-F} \le f^{\prime} \le \frac{f^2}{F} .
\end{eqnarray*}
\end{theorem}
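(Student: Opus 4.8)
The plan is to prove the core equivalence $(i)\Leftrightarrow(ii)\Leftrightarrow(iii)$ via the cycle $(i)\Rightarrow(ii)\Rightarrow(iii)\Rightarrow(i)$, and then add the fourth statement by showing $(iii)\Rightarrow(iv)$ and $(iv)\Rightarrow(iii)$. Two preliminary observations are used repeatedly; write $J(F)=(a,b)$ with $-\infty\le a<b\le\infty$, an interval since $F$ is nondecreasing. \emph{(a)} Bi-log-concavity forces $F$ to be continuous on all of $\RR$: $\log F$ and $\log(1-F)$ are finite and concave, hence continuous, on $(a,b)$, while a jump of $F$ at $a$ (resp. $b$) would create an interior downward jump of $\log(1-F)$ (resp. $\log F$), impossible for a concave function; outside $[a,b]$, $F$ is constant. \emph{(b)} On $J(F)$ the one-sided derivatives $D^\pm F$ exist at every point: $\log F$ and $\log(1-F)$, being concave, have one-sided derivatives everywhere on $(a,b)$, and since $F=\exp(\log F)$ and $1-F=\exp(\log(1-F))$, the chain rule transfers these to $F$, with $D^\pm\log F=D^\pm F/F$ and $D^\pm\log(1-F)=-D^\pm F/(1-F)$.

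For $(i)\Rightarrow(ii)$: the crucial step — and the one I expect to be the main obstacle — is to upgrade ``differentiable off a countable set'' (all that concavity of a single function provides) to ``differentiable everywhere on $J(F)$''. This is exactly where \emph{bi}-log-concavity is indispensable: concavity of $\log F$ gives $D^+F(x)/F(x)\le D^-F(x)/F(x)$, hence $D^+F(x)\le D^-F(x)$, whereas concavity of $\log(1-F)$ gives $-D^+F(x)/(1-F(x))\le-D^-F(x)/(1-F(x))$, hence $D^+F(x)\ge D^-F(x)$; together these force $D^+F(x)=D^-F(x)=:f(x)$ for every $x\in J(F)$. (The same kind of argument shows $f>0$ on $J(F)$: $f(x_0)=0$ would make $\log F$ have a nonpositive, hence identically zero, derivative on $[x_0,b)$, so $F$ would be constant there, contradicting $F(b^-)=1$.) The two displayed inequalities of $(ii)$ are then the tangent-line bounds for the concave functions $\log F$ and $\log(1-F)$, exponentiated; the cases $x+t\notin J(F)$ are checked separately using continuity and $f>0$.

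For $(ii)\Rightarrow(iii)$: fix $x<y$ in $J(F)$. Applying the first inequality of $(ii)$ once at base point $x$ with increment $y-x>0$ and once at base point $y$ with increment $x-y<0$, then taking logarithms, sandwiches the secant slope of $\log F$:
\[
\frac{f(y)}{F(y)}\ \le\ \frac{\log F(y)-\log F(x)}{y-x}\ \le\ \frac{f(x)}{F(x)},
\]
so the reverse hazard $f/F$ is non-increasing; the second inequality of $(ii)$, handled identically, shows $f/(1-F)$ is non-decreasing. For $(iii)\Rightarrow(i)$: since $F>0$ and is differentiable on $J(F)$, $\log F$ is differentiable on $(a,b)$ with non-increasing derivative $f/F$, hence concave there; the boundary bookkeeping from preliminary~(a) (together with the left derivative of $\log F$ at $b$ being $\ge0$) extends concavity to $\RR$, and the same argument with $-f/(1-F)$ handles $\log(1-F)$.

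For $(iii)\Rightarrow(iv)$: positivity of $f$ on $J(F)$ is the argument already indicated; $f$ is bounded because $f(x)=\tfrac{f(x)}{F(x)}F(x)\le\tfrac{f(x_0)}{F(x_0)}$ for $x\ge x_0$, with the symmetric bound via $f/(1-F)$ for $x\le x_0$. Local Lipschitz continuity uses both monotonicities at once: for $c\le x<y\le d$ in $J(F)$, writing $f=(f/F)\,F$ and using that $f/F$ decreases gives $f(y)-f(x)\le\tfrac{f(c)}{F(c)}(F(y)-F(x))$, while writing $f=(f/(1-F))(1-F)$ and using that $f/(1-F)$ increases gives $f(y)-f(x)\ge-\tfrac{f(d)}{1-F(d)}(F(y)-F(x))$; since $F(y)-F(x)\le(\sup_{[c,d]}f)(y-x)$ by the mean value theorem, $f$ is Lipschitz on $[c,d]$. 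Hence $f$ has an $L^1_{\mathrm{loc}}$ derivative $f'=F''$, and differentiating the a.e.\ inequalities $(f/F)'\le0$, $(f/(1-F))'\ge0$ gives exactly $-f^2/(1-F)\le f'\le f^2/F$. Conversely, for $(iv)\Rightarrow(iii)$: on a compact subinterval of $J(F)$, $f/F$ is a quotient of a Lipschitz function and a Lipschitz function bounded away from $0$, hence absolutely continuous, so $(f/F)'=(f'F-f^2)/F^2\le0$ a.e.\ (i.e.\ $f'\le f^2/F$) forces $f/F$ non-increasing, and symmetrically $f'\ge-f^2/(1-F)$ forces $f/(1-F)$ non-decreasing; continuity of $F$ on $\RR$ and differentiability on $J(F)$ are already assumed in $(iv)$, so $(iii)$ holds. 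Besides the differentiability step flagged above, the other place demanding genuine care is this boundary bookkeeping — pinning down $F$, $f$, $f/F$, and $f/(1-F)$ at the endpoints $a,b$ (finite or infinite) so that everything proved on $J(F)$ really does carry over to $\RR$.
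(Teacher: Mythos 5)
Your proof is correct and follows essentially the same route as the paper's treatment: the paper defers this $s=0$ case to D\"umbgen--Kolesnyk--Wellner but proves its bi-$s^*$-concave analogue (Theorem~\ref{thm:1s}) by exactly the scheme you use --- continuity and two-sided differentiability forced by the two concavities, tangent-line bounds for (ii), the double application of (ii) to sandwich secant slopes for (iii), the decompositions $f=(f/F)\,F$ and $f=(f/(1-F))(1-F)$ for local Lipschitz continuity, and the differential inequalities to recover the monotone hazard and reverse hazard. Your only departures are cosmetic: you close the cycle via (iii)$\Rightarrow$(i) and prove (iii)$\Leftrightarrow$(iv) separately, and you replace the paper's $\limsup/\liminf$ difference-quotient arguments and its appeal to Lemma~7 of \cite{DuembgenKW:2017} with a.e.\ differentiation and local absolute continuity of the quotients $f/F$ and $f/(1-F)$, an equivalent technical device.
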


An important implication of (iv) of Theorem~\ref{thm:DKWthm} is that the inequalities can be rewritten as follows:
\begin{eqnarray*}
-1 \le -F(x)  \le F(x) (1-F(x)) \frac{f^{\prime} (x)}{f^2 (x)} \le 1-F(x)  \le 1 .
\end{eqnarray*}
This implies that the bi-log-concave family of distributions satisfies  
\begin{eqnarray}
\gamma (F) \equiv \sup_{x \in J(F)}  F(x) (1-F(x)) \frac{ | f^{\prime} (x) |}{f^2 (x)} \le 1.
\label{GammaFboundedbyOneForLogConcaveF}
\end{eqnarray}
The parameter $\gamma (F)$ arises in the study of quantile processes and transportation  distances between
empirical distributions and true distributions on  $\RR$:  see e.g. 
\cite{MR0501290},  
\cite{MR838963, MR3396731} Chapter 18, page 643,   
\cite{Bobkov-Ledoux:2017},  
and 
\cite{MR2121458}. 

\section{Questions and extensions:  the bi$-s^*$-concave class}
\label{sec:QuestAndExt}

This immediately raises several questions:  
\begin{description}
\item[Question 1]
What about distributions in classes larger than the log-concave class?
In particular what happens for the $s-$concave classes described by 
\cite{MR0404559}? 
See 
\cite{MR954608},  
and 
\cite{MR0450480}.  
\item[Question 2] 
Is there a class of  bi-$s^*$-concave distributions with the property that if $f$ is $s-$concave, then
$F$ is bi-$s$-concave (or perhaps bi-$s^*$-concave with $s^*$ related to $s$)?
\item[Question 3]
Is there a class of  bi-$s^*$-concave distributions with a theorem analogous to Theorem~\ref{thm:DKWthm} with 
an analogue of Theorem~\ref{thm:DKWthm}(iv)  implying that $\gamma(F) $ is bounded by some function of $s$ 
for all bi-$s^*$-concave distributions $F$?    
\end{description}

We provide positive answers to Questions 1-3 when $s \in (-1,\infty)$, 
beginning with the following definition 
of {\sl bi-$s^*$-concavity} of a distribution function $F$.

\begin{definition}
\label{defn:BiSConcaveDefn}  
For $s \in (-1,\infty]$ we let $s^* \equiv s/(1+s) \in (-\infty, 1]$. 
For $s \in (-1,0)$ we say that a distribution function $F$ on $\RR$ is bi$-s^*-$concave  if both 
$x\mapsto F^{s^*} (x) $ and $x \mapsto (1-F(x))^{s^*}$ are convex functions of $x \in J(F)$.
For $s \in (0,\infty)$ we say that  $F$  is bi$-s^*-$concave  if
$x\mapsto F^{s^*} (x) $ is concave for $x \in (\inf J(F), \infty)$ 
and $x \mapsto (1-F(x))^{s^*}$  is concave for $x \in (-\infty, \sup J(F))$. 
For $s = 0$ we say that $F$ is bi-$0$-concave (or bi-log-concave) if both $x \mapsto \log F(x)$ 
and $x \mapsto \log (1-F(x))$ are concave functions of $x \in J(F)$.  Note that this definition of 
bi-log-concavity 
is equivalent to the definition of bi-log-concavity given by \cite{DuembgenKW:2017}.
\end{definition} 
\smallskip

To briefly explain this definition, recall that a density function $f$ (or just a non-negative function $f$) 
on $\RR$ (or even on $\RR^d$) is $s-$concave 
for $s < 0$ if $f^s$ is convex, while  $f$ is $s-$concave for $s>0$ if $f^s$ is concave on $J(F)$.
Furthermore, from the theory of concave measures due to 
\cite{MR0404559},   
\cite{MR0450480},  
and 
\cite{MR0428540},  
 if $f$ is $s-$concave, the probability measure $P$ on $(\RR,\mathcal{B} )$ defined 
by $P(B) = \int_B f(x) dx$ for Borel sets $B$, is $t - $concave with $t = s/(1+s) \equiv s^*$ if $s>-1$; 
see \cite{MR954608}  
for an introduction, and \cite{MR1898210}  
for a comprehensive review.   
 From  the basic theory of Borell, Brascamp and Lieb, and Rinott,
 it follows easily that if $f$ is $s-$concave with $s \in (-1,\infty]$, then $F$ and $1-F$ are 
 $s^*-$concave; i.e.  the distribution function $F$ corresponding to $f$ is bi$-s^*-$concave.  
 This proof, as well as a simpler calculus type proof assuming that derivatives exist,  
 is given in Section~\ref{sec:SconcavityImpliesBsConcavity}.
 The same argument also establishes the corresponding implication in the log-concave case 
 since, in the log-concave case, $s=0$  and $s^* = 0$ as well.  
 In Section~\ref{sec:Theorem1s} we provide a complete characterization of the class of bi-$s^*$-concave distributions 
 on $\RR$, answering Question 3.  
 
 For the moment we illustrate the definition with several examples.

\begin{example}
\label{exmpl:ex1}
Suppose $f_r$ is the $t-$density with $r >0$ ``degrees of freedom'':
$$
f_r (x) = \frac{C_r}{\left (1 + \frac{x^2}{r} \right )^{(r+1)/2}}  \ \ \ \mbox{for} \ \ x \in \RR.
$$
Here $C_r = \Gamma ((r+1)/2)/(\sqrt{\pi} \Gamma (r/2))$.
It is well-known (see e.g. 
\cite{MR0404559}) 
that $f_r \in \mathcal{P}_s$, the class of $s-$concave densities, if $s \le - 1/(1+r)$.
Note that $s$ takes values in $(-1,0)$ since $r \in (0,\infty)$.
From the Borell-Brascamp-Lieb inequality we guess that the ``right'' transformation $h$ of $F$ and $1-F$ to define
the Bi$-s^*-$concave class is  $h(u) = u^{s^*} = u^{s/(1+s)}$ where $s = -1/(1+r)$, the largest possible value of $s$.  
This leads directly to Definition~\ref{defn:BiSConcaveDefn}.     
Note that $s^*$ in the Borell-Brascamp-Lieb inequality is well-defined since $s>-1$.
Since $s = -(1+r)^{-1}$  we see that we can take $s^* = s/(1+s) = -1/r$ for the $t_r$ family.  Then we want to know if 
$F_r^{-1/r}$ and $(1-F_r)^{-1/r}$ are convex.  Direct computation shows that these are convex functions of $x$.
Plotting these for $r \in \{1/2, 1, 4 \}$ we see that they are indeed convex.   
Moreover we find that 
$\gamma (F_r) = 1 + 1/r = 1/(1+s)$;  this agrees nicely with the log-concave and bi-log-concave picture when $r = \infty$ 
(so $\gamma (F_{\infty} ) = \gamma (N(0,1)) = 1$), and it yields distributions with arbitrarily large values of 
$\gamma (F)$ by considering $\gamma (F_r)$ with $r $ arbitrarily small.  Note, in particular, that this yields 
$\gamma (F_1) = \gamma (Cauchy) = 2$.   Also note that this suggests the conjecture
$\gamma (F) \le 1/(1+s) $ for all  bi-$s^*$-concave distribution functions $F$ where $1/(1+s)$ varies from $1$ to $\infty$ 
as $s$ varies from $0$ to $-1$.
\end{example}
\smallskip
 
\begin{example}
\label{exmpl:ex2}
Suppose that $f_{a,b} $ is the family of $F-$distributions with ``degrees of freedom'' $a>0$ and $b>0$.
(In statistical practice, if $T$ has the density $f_{a,b}$, 
this would usually be denoted by $T\sim F_{b,a}$ where $b$ is the ``numerator degrees of freedom''
and $a$ is the ``denominator degrees of freedom''. )
The density is given by 
\begin{eqnarray*}
f_{a,b} (x) = C_{a,b} \frac{x^{(b/2) -1}}{\left ( a + b x \right )^{(a+b)/2} } \ \ \ \mbox{for} \ \ x \ge 0 .
\end{eqnarray*}
(In fact, $C(a,b) = a^{a/2} b^{b/2} / \mbox{Beta} (a/2,b/2)$, and $f_{a,b} (x) \rightarrow g_b (x)$ as $a\rightarrow \infty$ 
where $g_b$ is the Gamma density with  parameters $b/2$ and $b/2$.)
It is well-known (see e.g. 
\cite{MR0404559}) 
that $f_{a,b} \in \mathcal{P}_s$, the class of $s-$concave densities, 
if $s \le  - 1/(1 + \frac{a}{2} )$ when $a \ge 2$ and $b\ge 2$.   This implies that $s \in [-1/2, 0)$, and the 
resulting $s^* = s/(1+s) $ is in $[-1,0)$.  By Proposition~\ref{prop:1s} it follows that 
$F^{s^*}$ and $(1-F)^{s^*}$ are convex; i.e. $F$ and $1-F$ are $s^*-$ concave.  
This is confirmed by numerical computation.  
\end{example}

\begin{example}
\label{exmpl:ex3} 
Suppose that $f_{a,b} (x) \equiv f(x; a,b) =  (a/b)(x/b)^{-(a+1)} 1_{[b,\infty)} (x) $, the Pareto distribution 
with parameters $a$ and $b$.  In this case $f_{a,b} $ is $s-$concave for each $s \le -1/(1+a)$.  
Thus we take $s = -1/(1+a) \in (-1,0) $ for $a \in (0, \infty)$.  Note that $s^* = s/(1+s) = -1/a$.    
Note that $f^s_{a,b} (x) = (x/b)\cdot (b/a)^{1/(1+a)}$ is certainly convex.  Furthermore, it is easily seen that 
$$
CR_R (x) \equiv (1-F(x)) \frac{f'(x)}{f^2 (x) } = 1 - s^* = 1+1/a \ \ \ \mbox{for all} \ \ x > b.
$$
Thus the Pareto distribution is analogous to the exponential distribution in the log-concave case in the sense that it 
is exactly on the convex {\sl and} concave boundary.
\end{example}

\begin{example}
\label{exmpl:ex4}
Suppose that $f_r (x) = C_r (1 - x^2/r)^{r/2} 1_{[-\sqrt{r} , \sqrt{r}]} (x)$ where $r \in (0,\infty)$.
Here 
$$
C_r =\Gamma ((3+r)/2)/ (\sqrt{\pi r} \Gamma (1+r/2)) .
$$
Note that 
$f_r$ is $s-$concave with $s = 2/r \in (0,\infty)$ since $f_r^{2/r} (x) = C_r^{2/r}(1-x^2/r) 1_{[-\sqrt{r},\sqrt{r}]} (x)$ is concave.
As $r\rightarrow \infty$ it is easily seen that $f_r (x) \rightarrow (2\pi)^{-1/2} \exp (-x^2/2)$, the standard normal density.
Thus $r=\infty$ corresponds to $s=0$.  On the other hand, 
\begin{eqnarray*}
g_r (x) & \equiv &\sqrt{r} f_r (\sqrt{r} x ) = \sqrt{r} C_r (1-x^2)^{r/2} 1_{[-1,1]} (x)\\
& \rightarrow & 2^{-1} 1_{[-1,1]} (x) \ \ \mbox{as} \ r \rightarrow 0.
\end{eqnarray*}
Thus $r=0$ corresponds to $s = + \infty$.
\end{example}

\begin{figure}[ht]
    \centering
    \includegraphics[width=\linewidth,height=5.5cm,keepaspectratio]{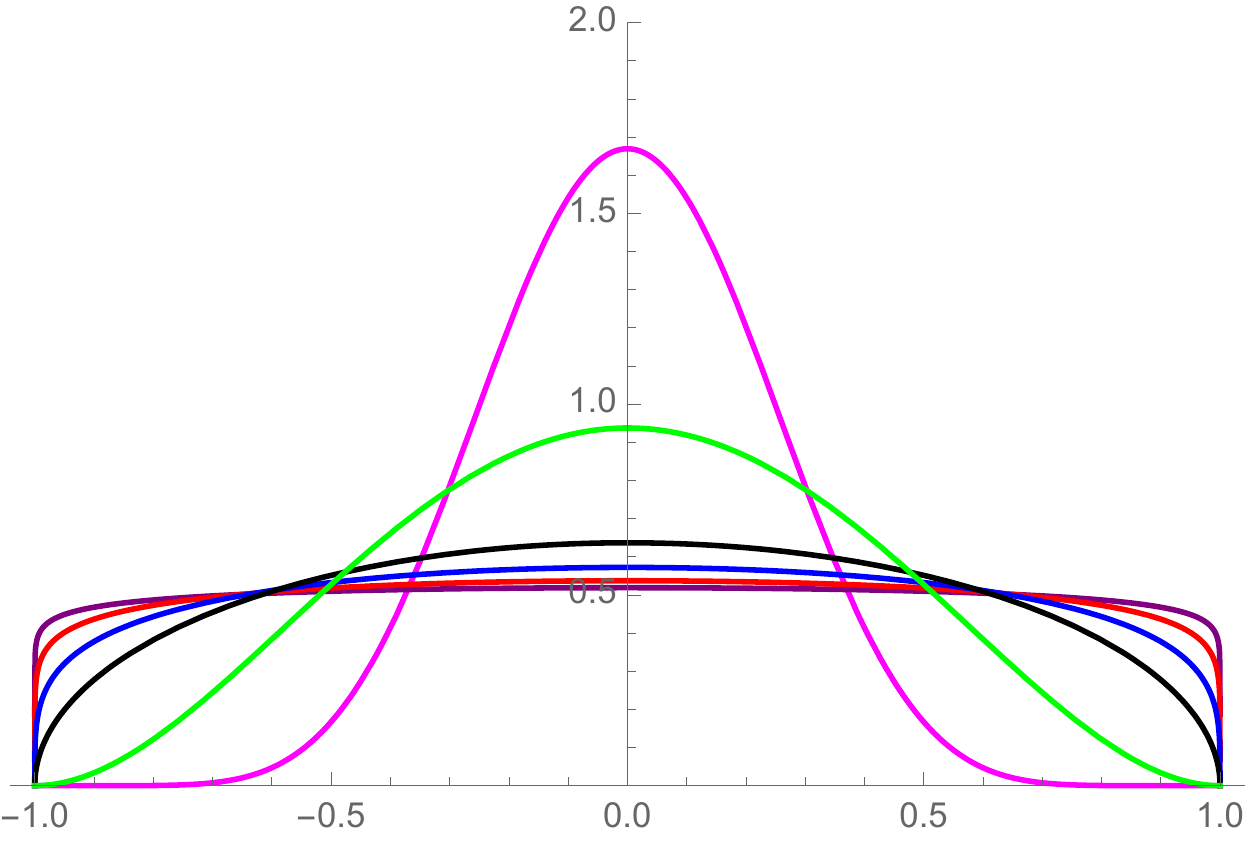}
    \caption{The $s-$concave densities $g_r$ of Example~\ref{exmpl:ex4} with $s = 2/r \in (0,\infty)$: 
    $s=1/8$, magenta; $s=1/2$, green;  $s= 2$, black;  $s=4$, blue;  $s=8$, red;  $s=16$, purple.}
     \label{fig:fig0sConPs}
\end{figure}

\section{$s$-concavity of $f$ implies $s^*$-concavity of $F$ and $1-F$}
\label{sec:SconcavityImpliesBsConcavity}

Motivated by Examples~\ref{exmpl:ex1}-\ref{exmpl:ex4}, 
we first give an extension of the log-concave preservation result of 
\cite{MR2213177}; 
also see Lemma 3 of  \cite{MR1637480}.
\smallskip
 
\begin{proposition}
\label{prop:BagnoliBergstrom} 
(Bagnoli and Bergstrom;  An;  Barlow and Proschan)\\
If $f$ is log-concave then both $F$ and $1-F$ are log-concave; i.e. $\log F$ and $\log (1-F)$ 
are concave.
\end{proposition}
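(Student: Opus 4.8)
The plan is to reduce the proposition to two monotonicity statements --- that the reverse hazard $f/F$ is non-increasing and the hazard $f/(1-F)$ is non-decreasing --- and then to obtain these from the defining property of log-concavity, namely that $x \mapsto \log f(x)$ is concave, equivalently that its one-sided derivatives are non-increasing on the interior of its support. Since $f$ is a density, $F$ is absolutely continuous and hence continuous on $\RR$, and log-concavity of $f$ forces $\{ f > 0 \}$ to be an interval $I$; it therefore suffices to work on $\mathrm{int}(I)$, where $f$ is positive and locally Lipschitz, and to patch in the endpoints of $I$ afterwards. On $\mathrm{int}(I)$ one has $(\log F)' = f/F$ and $(\log(1-F))' = -f/(1-F)$, so it is enough to show that $f/F$ is non-increasing and $f/(1-F)$ is non-decreasing there.

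The key step is to rewrite the reciprocals of these ratios as averages of likelihood ratios: for $x \in \mathrm{int}(I)$,
\[
\frac{F(x)}{f(x)} = \int_0^\infty \frac{f(x-u)}{f(x)}\, \d{u},
\qquad
\frac{1-F(x)}{f(x)} = \int_0^\infty \frac{f(x+u)}{f(x)}\, \d{u}.
\]
For fixed $u > 0$, concavity of $\log f$ gives that $\log f(x) - \log f(x-u) = \int_{x-u}^{x} (\log f)'(s)\, \d{s}$ is non-increasing in $x$, since $(\log f)'$ is non-increasing; hence $x \mapsto f(x-u)/f(x)$ is non-decreasing (with the value $0$ on the range where $x-u \le \inf I$), so $F/f$ is non-decreasing and $f/F$ is non-increasing. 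Symmetrically, $\log f(x+u) - \log f(x) = \int_{x}^{x+u} (\log f)'(s)\, \d{s}$ is non-increasing in $x$, so $x \mapsto f(x+u)/f(x)$ is non-increasing, $(1-F)/f$ is non-increasing, and $f/(1-F)$ is non-decreasing. Thus $(\log F)'$ and $(\log(1-F))'$ are both non-increasing on $\mathrm{int}(I)$, i.e. $\log F$ and $\log(1-F)$ are concave there.

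It remains to extend concavity to all of $\RR$: to the left of $I$ one has $F \equiv 0$ and $\log F \equiv -\infty$, so concavity is vacuous, and at a finite attained left endpoint the conclusion follows from continuity of $F$ together with monotone convergence of the one-sided slopes $f/F$; the right endpoint and $1-F$ are handled symmetrically. I expect this boundary bookkeeping to be the main (if minor) obstacle, since a log-concave density need not be differentiable or even continuous at the endpoints of its support and may vanish there, so the identities $(\log F)' = f/F$ and the integral representations above must first be justified on $\mathrm{int}(I)$ and then pushed to the boundary.

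A shorter but less elementary alternative is to invoke preservation of log-concavity under marginalization (Pr\'ekopa's theorem, equivalently the Borell--Brascamp--Lieb inequality with $s = 0$): since $F(x) = \int_{\RR} \mathbf{1}\{ t \le x \}\, f(t)\, \d{t}$ and $(x,t) \mapsto \mathbf{1}\{ t \le x \}$ is the indicator of a convex set and hence log-concave, the integrand is jointly log-concave and the integral is log-concave in $x$; applying the same reasoning to $\mathbf{1}\{ t \ge x \}$ gives log-concavity of $1-F$. I would present the hazard-rate argument as the main proof, since it foreshadows the reverse-hazard/hazard inequalities appearing in the bi-$s^*$-concave characterization, and record the measure-theoretic one as a remark.
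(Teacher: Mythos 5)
Your proof is correct, but it takes a genuinely different route from the paper's main argument. The paper's first proof is the short calculus proof (traced to Dierker and Bagnoli--Bergstrom): assuming $f'$ exists, log-concavity is used in the form ``$f'/f$ non-increasing'' to obtain $\frac{f'}{f}(x)\,F(x) \le \int_a^x \frac{f'(y)}{f(y)}\,f(y)\,dy = f(x)$, i.e.\ $f'F - f^2 \le 0$, which is exactly the second-order criterion for concavity of $\log F$ (and symmetrically for $1-F$); as a by-product it delivers the quantitative bound $F(1-F)\,|f'|/f^2 \le 1$ that motivates the whole paper. You instead prove the equivalent hazard-rate statements --- $f/F$ non-increasing and $f/(1-F)$ non-decreasing, i.e.\ condition (iii) of Theorem~\ref{thm:DKWthm} --- directly from the monotone-likelihood-ratio property of log-concave densities, via the representation $F(x)/f(x) = \int_0^\infty f(x-u)/f(x)\,du$ and its mirror image. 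That is essentially the PF$_2$/IFR-style argument of Barlow--Proschan and An which the paper records in Remark~\ref{rem:1} precisely because it does not require $f'$ to exist; so your main proof is more general than the paper's calculus proof (no differentiability of $f$ is needed, only continuity and local Lipschitzness of $f$ on the interior of its support, which concavity of $\log f$ supplies), at the cost of not producing the Cs\"org\H{o}--R\'ev\'esz-type inequality as an immediate corollary. Your alternative proof by Pr\'ekopa-type marginalization of the jointly log-concave integrand $(x,t)\mapsto \mathbf{1}\{t\le x\}\,f(t)$ belongs to the same circle of ideas as the paper's second, fully general proof, which instead applies the Borell--Brascamp--Lieb--Rinott measure-concavity inequality to Minkowski combinations of half-lines $(-\infty,x]$ (the $s=0$ specialization of the second proof of Proposition~\ref{prop:1s}). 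Finally, the boundary bookkeeping you flag is lighter than you fear: by Definition~\ref{defn:BiSConcaveDefn}, concavity of $\log F$ and $\log(1-F)$ is only required on $J(F)$, which for a log-concave density coincides with the interior of the support interval, where your identities are already fully justified.
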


\begin{proposition}
\label{prop:1s}  
If $f$ is $s-$concave with $s \in (-1,\infty)$, 
then both $F $ and $1-F$ are $ s^* = s/(1+s)$ concave; 
i.e. $F^{s^*} $ and $(1-F)^{s^*}$ are convex when $s<0$; and $\log F $ and $\log (1-F)$ are concave
when $s=0$; and  $F^{s^*} $ and $(1-F)^{s^*}$ are convex when $s>0$.  Equivalently, $F$ is bi-$s^*$-concave.
\end{proposition}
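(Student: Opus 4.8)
The plan is to deduce the $s^*$-concavity of $F$ and of $1-F$ from the Borell--Brascamp--Lieb inequality (equivalently, from the fact that an $s$-concave density generates an $s^*$-concave measure when $s>-1$), and then to supplement this with a direct calculus proof valid under a differentiability assumption. For the measure-theoretic argument, recall that if $f$ is $s$-concave on $\RR$ with $s\in(-1,\infty)$ then the associated probability measure $P$ is $s^*$-concave with $s^*=s/(1+s)$, meaning that for all Borel sets $A,B$ and $\lambda\in(0,1)$,
\begin{equation*}
P\bigl(\lambda A + (1-\lambda) B\bigr) \ge \bigl(\lambda P(A)^{s^*} + (1-\lambda) P(B)^{s^*}\bigr)^{1/s^*}
\end{equation*}
(with the usual conventions for $s^*\le 0$, and the $\min$ reading at $s^*=0$). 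The first step is to apply this with half-lines. Fix $x<y$ and $\lambda\in(0,1)$, set $z=\lambda x+(1-\lambda)y$, and take $A=(-\infty,x]$, $B=(-\infty,y]$; then $\lambda A+(1-\lambda)B=(-\infty,z]$, so $F(z)\ge(\lambda F(x)^{s^*}+(1-\lambda)F(y)^{s^*})^{1/s^*}$. When $s^*<0$ the map $u\mapsto u^{1/s^*}$ is decreasing, so this rearranges to $F(z)^{s^*}\le \lambda F(x)^{s^*}+(1-\lambda)F(y)^{s^*}$, i.e.\ $F^{s^*}$ is convex; when $s^*>0$ it rearranges to concavity of $F^{s^*}$; and at $s^*=0$ one gets $\log F$ concave. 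Taking instead $A=[x,\infty)$, $B=[y,\infty)$ gives $\lambda A+(1-\lambda)B=[z,\infty)$ and hence the same conclusion for $1-F$. This already yields the full statement, including that $F$ is bi-$s^*$-concave in the sense of Definition~\ref{defn:BiSConcaveDefn}; one only needs to check that the one-sided domain restrictions in that definition for $s>0$ (where $F^{s^*}$ is required concave merely on $(\inf J(F),\infty)$, etc.) are automatically respected, which is immediate since $F^{s^*}$ is defined and concave wherever $F>0$.

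The second, self-contained step is the calculus proof assuming $f=F'$ exists and is positive on $J(F)$. Here I would compute $(F^{s^*})'' = s^* F^{s^*-2}\bigl(F f' + (s^*-1) f^2\bigr)$ and show the bracketed quantity has the right sign using $s$-concavity of $f$. The key algebraic identity is that $s$-concavity of $f$ (i.e.\ $(f^s)''\le 0$, which reads $f f'' + (s-1)(f')^2 \le 0$ after dividing by $sf^{s-2}$ when $s>0$, with the inequality reversed appropriately when $s<0$) must be combined with the elementary bound relating $F'' = f'$ to $f^2$. Concretely, writing $G = F^{s^*}$ and using $1/s^* = 1 + 1/s = (1+s)/s$, a direct manipulation shows $\mathrm{sign}((F^{s^*})'') = \mathrm{sign}(s^*)\cdot\mathrm{sign}(Ff'/f^2 - (1-s^*))$, so one needs $F f'/f^2 \le 1-s^*$, i.e.\ $F(x)f'(x) \le (1-s^*)f^2(x)$; and symmetrically $(1-F)(x)(-f'(x)) \le (1-s^*)f^2(x)$ for $1-F$. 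These two inequalities are exactly the differential form of the bi-$s^*$-concavity characterization (the $s$-analogue of Theorem~\ref{thm:DKWthm}(iv)), and they follow from integrating the $s$-concavity inequality for $f$; this is the computation carried out in detail in Section~\ref{sec:Theorem1s}. For the present proposition it suffices to cite the Borell--Brascamp--Lieb route above, with the calculus argument included as an illustrative alternative under smoothness.

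The main obstacle is bookkeeping around the sign of $s^*$ and the degenerate conventions in the Borell--Brascamp--Lieb inequality: the direction of the rearrangement flips between $s\in(-1,0)$, $s=0$, and $s>0$, and one must be careful that $s^*=s/(1+s)$ indeed lies in $(-\infty,0)$, equals $0$, or lies in $(0,1)$ in these three regimes respectively, and that the $1/s^*$-mean degenerates to $\min$ exactly at $s^*=0$ (consistent with the $s=0$ log-concave case handled by Proposition~\ref{prop:BagnoliBergstrom}). A second, minor subtlety is ensuring the half-line sets have positive measure so the inequalities are non-vacuous, and that $z=\lambda x+(1-\lambda)y\in J(F)$ whenever $x,y\in J(F)$, which holds because $J(F)$ is an interval. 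Once these conventions are pinned down, both the measure-theoretic and the calculus arguments are short.
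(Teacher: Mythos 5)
Your main argument is essentially the paper's own general proof: the Borell--Brascamp--Lieb/Borell--Rinott result gives $s^*$-concavity of the induced measure, which applied to the half-lines $(-\infty,x]$ and $(x,\infty)$ yields $s^*$-concavity of $F$ and $1-F$ (with the domain bookkeeping for $s>0$ handled as you indicate), and your supplementary calculus sketch is the paper's first, smoothness-assuming proof. The only slip is your stated convention at $s^*=0$: the generalized mean $M_{s^*}(a,b;\theta)$ degenerates there to the geometric mean $a^{1-\theta}b^{\theta}$, not to $\min\{a,b\}$ (which is the $s^*=-\infty$ case), and it is the geometric-mean reading that your conclusion ``$\log F$ concave'' actually requires.
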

  
\begin{remark}
\label{rem:1}
Results related to Proposition~\ref{prop:BagnoliBergstrom} have a long history in reliability theory and econometrics.
\cite{MR0438625} (Lemma 5.8, page 77) showed that if $f$ is log-concave (i.e. $PF_2$, or Polya frequency of order $2$), 
then $f/(1-F)$ is non-decreasing (or ``Increasing Failure Rate'' in their terminology);   they also noted that 
the IFR property is equivalent to $1-F$ being log-concave.  Their proof of the IFR property using the equivalence 
of log-concavity of $f$ and $f \in PF_2$ is delightfully short and does not rely on existence of $f^{\prime}$.
\cite{MR1637480} also proves Proposition~\ref{prop:BagnoliBergstrom} using $PF_2$ 
equivalences to log-concavity without requiring existence of $f^{\prime}$.
The simple ``calculus based'' proof given here and taken from  \cite{MR2213177}, which relies on the 
classical ``second-order conditions'' for convexity (see e.g. \cite{MR2061575}, section 3.1.4), 
was apparently given by \cite{Dierker:1991}, but is likely to have a much longer history.
\end{remark}

In the modern theory of convexity, Proposition~\ref{prop:BagnoliBergstrom} is an immediate consequence of 
the results of \cite{MR0404557}.  
As we will see in the second proof,  Proposition~\ref{prop:1s}  is an immediate consequence 
of the results of \cite{MR0404559}, \cite{MR0450480}, and \cite{MR0428540}.  
\smallskip  

\par\noindent
{\bf Proof of Proposition~\ref{prop:BagnoliBergstrom}} \\
{\bf First Proof, assuming $f^{\prime}$ exists:}\\
Fact 1:  First note that $f$ is log-concave if and only if $f'/f$ is non-increasing. \\
Fact 2:  Note that $F(x) = \int_a^x f(y) dy$ is log-concave if and only if 
$f'(x) F(x) - f^2(x) \le 0$.  To see this, note that 
\begin{eqnarray*}
&& (\log F)^{\prime} (x) = \frac{f}{F} (x), \ \ \mbox{and}\\
&& (\log F)^{\prime \prime} (x) = \frac{f^{\prime \prime}}{F} - \frac{f^2}{F^2} = \frac{f' F - f^2}{F^2} \le 0 .
\end{eqnarray*}
Now if $f$ is log-concave we can use fact 1 to write
\begin{eqnarray*}
\frac{f^{\prime}}{f} (x) F(x) 
& = & \frac{f^{\prime}}{f} (x)\int_a^x f(y) dy \\
& \le & \int_a^x \frac{f^{\prime} (y)}{f(y)} f(y) dy = \int_a^x f^{\prime} (y) dy \\
& = & f(x) - f(a) = f(x) .
\end{eqnarray*}
Rearranging this inequality yields 
$f^{\prime} (x) F(x) - f^2(x) \le 0$, and by Fact 2 we conclude that $F$ is log-concave.
Note that this inequality also can be rewritten as 
$\frac{f^{\prime} (x)}{f^2 (x)} F(x) \le 1$,
and hence we conclude that 
\begin{eqnarray*}
\frac{f^{\prime} (x)}{f^2 (x)} F(x)(1-F(x)) \le 1-F(x) \le 1
\end{eqnarray*}
The argument for $1-F$ is analogous and yields the inequality 
$\frac{f^{\prime} (x)}{f^2 (x)} (1-F(x)) \ge -1$,
and hence we conclude that 
\begin{eqnarray*}
\frac{f^{\prime} (x)}{f^2 (x)} F(x)(1-F(x)) \ge -F(x) \ge -1
\end{eqnarray*}
Thus both $F$ and $1-F$ are log-concave, and $\gamma (F) \le 1$.
\hfill $\Box$
\medskip

\par\noindent
{\bf Second Proof, general (without assuming $f^{\prime}$ exists):}   See the second proof of Proposition~\ref{prop:1s} below.
\medskip

\par\noindent
{\bf Proof of Proposition~\ref{prop:1s}} \\
{\bf First Proof, assuming $f^{\prime}$ exists:}  \\
Suppose $s\in(-1,0)$; the proof for $s>0$ is similar. \\
Fact 1-s:  First note that $f$ is $s-$concave for $s<0$ if and only if $\varphi \equiv f^{s}$ is convex on $J(F)$, which is 
equivalent to $\varphi^{\prime}$  being non-decreasing. But we find
$$
\varphi^{\prime} (x) = (f^s)^{\prime} (x) = s f^{s-1} (x) f^{\prime} (x) = s f^s (x) (f^{\prime} (x) / f(x) ) .
$$
Fact 2-s:  Note that $F(x) = \int_a^x f(y) dy$ is $s^*-$concave for $\st<0$ if and only if 
$$
(s^*-1) f^2 + F f^{\prime}   \le 0\ \text{on }J(F). 
$$  
To see this, note that for $x\in J(F)$
\begin{eqnarray*}
( F^{s^*})^{\prime} (x) & = & s^* F^{s^*-1}(x) f(x) , \ \ \mbox{and}\\
(F^{s^*})^{\prime \prime} (x) 
& = & s^* (s^*-1) F^{s^*} (x)  \left ( \frac{f}{F} (x) \right )^2 + s^* \frac{f^{\prime}(x)}{F(x)} F^{s^*} (x) \\
& = & s^* \frac{F^{s^*}(x)}{F^2 (x)} \left \{ (s^*-1) f^2 (x) + F(x) f^{\prime} (x) \right \} \\
& \ge & 0
\end{eqnarray*}
if and only if (since $s^* = s/(1+s) < 0$)
$$
(s^*-1) f^2 (x) + F(x) f^{\prime}(x) \le 0.
$$
Now if $f$ is $s-$concave and $x\in J(F)$ we can use fact 1-s to write
\begin{eqnarray*}
s f^s (x) \frac{f^{\prime}}{f} (x) F(x) 
& = & s f^s (x) \frac{f^{\prime}}{f} (x)\int_a^x f(y) dy \\
& \ge & \int_a^x s f^s (y) \frac{f^{\prime} (y)}{f(y)} f(y) dy = \int_a^x s f^s (y) f^{\prime} (y) dy \\
& = & \frac{s}{s+1} \left ( f^{s+1} (x) - f^{s+1}(a)\right ) = \frac{s}{s+1} f^{s+1} (x) .
\end{eqnarray*}
Rearranging this inequality (and noting that $s<0$) yields 
$(s^*-1) f^2 + F f^{\prime}   \le 0$, 
and by Fact 2-s we conclude that $F$ is $s^*-$concave.
Note that for $x \in J(F)$ this inequality can also be rewritten as 
$\frac{f^{\prime} (x)}{f^2 (x)} F(x) \le \frac{1}{1+s}$,
and hence we conclude that 
\begin{eqnarray*}
\frac{f^{\prime} (x)}{f^2 (x)} F(x)(1-F(x)) \le \frac{1}{1+s} (1-F(x)) \le \frac{1}{1+s}  = 1 - s^* .
\end{eqnarray*}
The argument for $1-F$ is analogous and yields the inequality 
$\frac{f^{\prime} (x)}{f^2 (x)} (1-F(x)) \ge - \frac{1}{1+s} = - (1-s^*)$,
and hence we conclude that 
\begin{eqnarray*}
\frac{f^{\prime} (x)}{f^2 (x)} F(x)(1-F(x)) \ge - \frac{1}{1+s} F(x) \ge - \frac{1}{1+s}
\end{eqnarray*}
Thus both $F$ and $1-F$ are $s^*-$concave, and $\gamma (F) \le 1/(1+s)$. 
\hfill $\Box$
\medskip

\par\noindent
{\bf Proof of Proposition~\ref{prop:1s}} \\
{\bf Second Proof, general (without assuming $f^{\prime}$ exists):} 
First some background and definitions:\\
$\bullet$ \ \ Let $a,b \ge 0$ and $\theta \in (0,1)$.  
The generalized mean of order $s \in \RR$ is defined by 
\begin{eqnarray*}
M_s (a,b; \theta) 
= \left \{ \begin{array}{l l} ((1-\theta)a^s + \theta b^s )^{1/s},  &  \mbox{if} \ \pm s \in (0,\infty), \\
                                         a^{1-\theta} b^{\theta} , & \mbox{if} \ s = 0,\\
                                         \max\{ a,b \} , & \mbox{if} \ s = \infty, \\
                                         \min\{ a,b \} , & \mbox{if} \ s = - \infty .
              \end{array}
\right .
\end{eqnarray*}
$\bullet$  \ \ Let $(M,d)$ be a metric space with Borel $\sigma-$field $\mathcal{M}$.  
A measure $\mu$ on $\mathcal{M}$ is called {\sl $t-$concave} if for nonempty sets $A,B \in \mathcal{M}$ and $0 < \theta < 1$ we 
have 
$$
\mu_{*} ((1-\theta ) A + \theta B) \ge M_t ( \mu_{*} (A), \mu_{*} (B) ; \theta ) .
$$
$\bullet $ \ \ A non-negative real-valued function $h$ on $(M,d)$ is called {\sl $s-$concave} if for $x,y \in M$ and $0 < \theta <1$ 
we have 
$$
h((1-\theta)x + \theta y) \ge M_s ( h(x), h(y) ; \theta ).
$$
$\bullet$ \ \ Suppose $(M, d) = ( \RR^k, | \cdot | )$, $k-$dimensional Euclidean space with the usual Euclidean metric 
and suppose that $f$ is an $s-$concave density function with respect to Lebesgue measure $\lambda $ on $\mathcal{B}_k$, 
and consider the probability measure $\mu$ on $\mathcal{B}_k$ defined by 
$$
\mu (B) = \int_B f d \lambda  \ \ \ \mbox{for all} \ \ B \in \mathcal{B}_k .
$$
Then by a theorem of Borell (1975), Brascamp and Lieb (1976), and Rinott (1976),
the measure $\mu$ is $s^*$ concave where $s^* = s/(1+ks)$ if $s \in (-1/k,\infty)$ and $s^* = 0$ if $s= 0$.\\
$\bullet$ \ \ Here we are in the case $k=1$.    Thus for $s \in (-1,\infty)$ the measure $\mu$ 
is $s^*$ concave:  for $s \in (-1,\infty)$, $A, B \in \mathcal{B}_1$, and $0 < \theta < 1$,
\begin{eqnarray}
\mu_{*} ( (1-\theta )A + \theta B) \ge M_{s^*} ( \mu_{*} (A) , \mu_{*} (B) ; \theta ) ;
\label{GeneralS-concaveMeasInequalityForR}
\end{eqnarray}
here $\mu_{*}$ denotes inner measure 
(which is needed in general in view of examples noted by \cite{MR0260958}).
With this preparation we can give our second proof of Proposition~\ref{prop:1s}:     
if $A = (-\infty, x]$ and $B = (-\infty, y]$ for $x,y \in J(F)$,
it is easily seen that 
\begin{eqnarray*} 
(1-\theta) A + \theta B 
& = & \{ (1-\theta ) x' + \theta y' : \ x' \le x , \ y' \le y \}\\
& \subset & \{ (1-\theta ) x' + \theta y' : \ (1-\theta )x' + \theta y'  \le (1-\theta) x + \theta y \} \\
& = & (-\infty, (1-\theta)x + \theta y ].
\end{eqnarray*}
Therefore, with the second inequality following from (\ref{GeneralS-concaveMeasInequalityForR})
\begin{eqnarray*}
F((1-\theta) x + \theta y) 
& = & \mu ((-\infty, (1-\theta)x + \theta y])\\
& \ge & \mu ( (1-\theta ) (-\infty,x] + \theta (-\infty, y]) \\
& \ge & M_{s^*} ( \mu ((-\infty,x]) , \mu ((-\infty, y]); \theta ) = M_{s^*} (F(x), F(y); \theta ) ;
\end{eqnarray*}
i.e. $F$ is $s^*-$concave.  Similarly, taking $A = (x,\infty)$ and $B = (y,\infty) $ it follows that
$1-F$ is $s^*-$concave.  

Note that this argument contains a second proof of Proposition~\ref{prop:BagnoliBergstrom} when $s=0$.  
\hfill $\Box$

\section{Bi-$s^*$-concave is (much!) bigger than $s-$concave}
\label{sec:BiIsBigger}

Here we note that just as the class of bi-log-concave distributions is considerably larger 
than the class of log-concave distributions (as shown by \cite{DuembgenKW:2017}), the class of bi$-s^*-$concave distributions is 
considerably larger than the class of $s-$concave distributions.  In particular, multimodal distributions
are allowed in both the bi-log-concave and the bi-s-concave classes. 
 
\begin{example}
\label{exmpl:ex5} 
(\cite{DuembgenKW:2017}, pages 2-3) Suppose that $f$ is  the mixture  $(1/2)N(-\delta,1) +(1/2) N(\delta,1)$.  
\cite{DuembgenKW:2017} showed (numerically) that the corresponding distribution function $F$ is bi-log-concave for $\delta \le 1.34$
but not for $\delta \ge 1.35$.  This distribution has a bi-modal density for $\delta = 1.34$.  
\end{example}  
 
\begin{example}
\label{exmpl:5s}
Now suppose that $f$ is the mixture $(1/2) t_1 (\cdot - \delta) + (1/2) t_1 (\cdot + \delta ) $ with $\delta >0$ 
where $t_r$ is the standard $t$ density with $r$ degrees of freedom as in Example~\ref{exmpl:ex5}.  
By numerical calculation, this density is bi$-s^*-$concave for $\delta = 1.4$, but fails to be bi$-s^*-$concave for $\delta = 1.5$.
Again by numerical calculations the $t_1$ mixture density with  $\delta =1.475 $ is bi-$(-1/2)^*$-concave, 
but with $\delta =1.48$ it is {\sl not} bi-$(-1/2)^*$-concave;  see Figure~\ref{fig:fig4Mixed}.

\end{example}
The following plots illustrate the bounds in Section~\ref{sec:Theorem1s}.

\begin{figure}[ht]
    \centering
    \includegraphics[width=\linewidth,height=5.5cm,keepaspectratio]{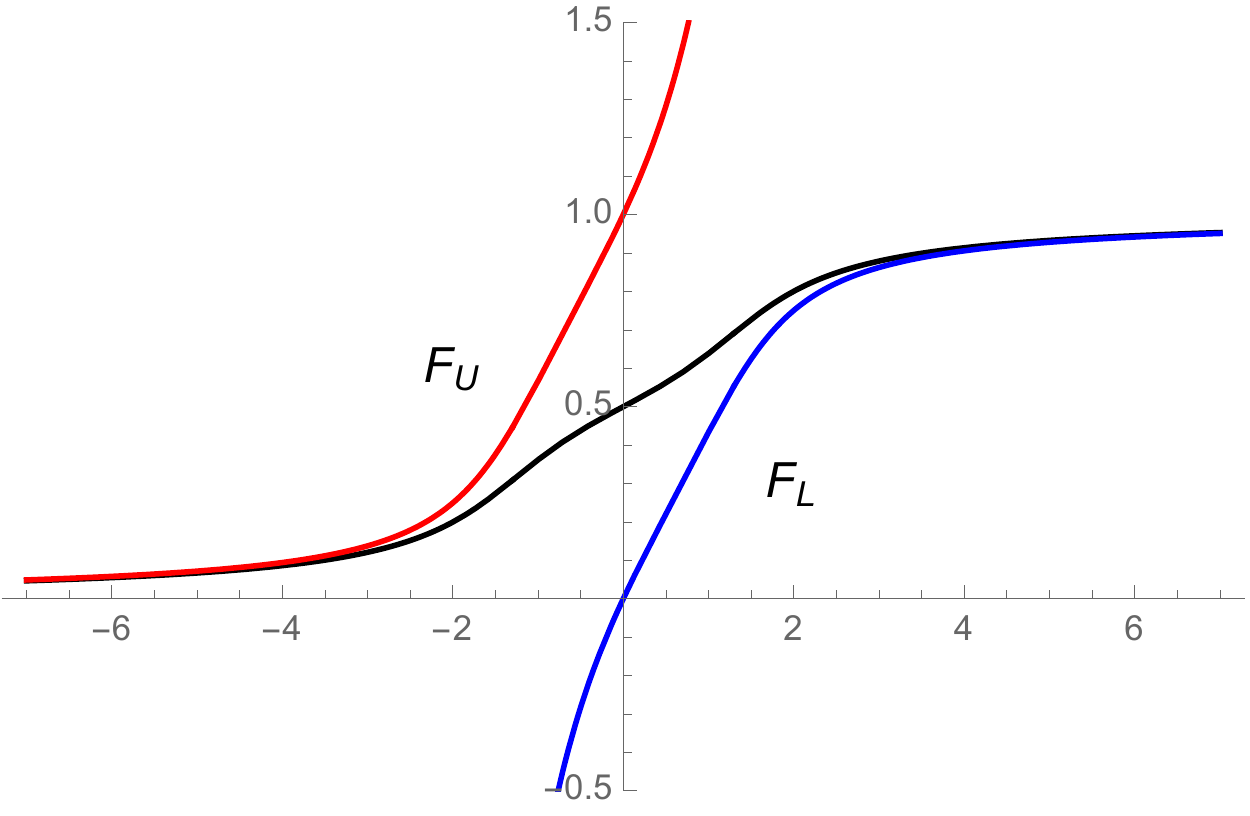}
    \caption{The bi$-s^*-$concave $t_1$ mixture distribution function $F$ (black) for $\delta = 1.3$
    with its convex upper bound $F_U$ (red) and concave lower bound $F_L$ (blue) defined by (\ref{FUpperSNeg}) and (\ref{FLowerSNeg}).}
     \label{fig:fig1Mixed}
 \end{figure}
Upper and lower bounds for the density $f = F^{\prime} $ of $F$ follow from (iii)  of Theorem~\ref{thm:1s}.
These bounds are illustrated for the bi$-s^*-$concave distribution $t_1$ mixture with $\delta = 1.3$ 
in Figure~\ref{fig:fig2Mixed}.
 
 \begin{figure}[ht]
    \centering
    \includegraphics[width=\linewidth,height=5.5cm,keepaspectratio]{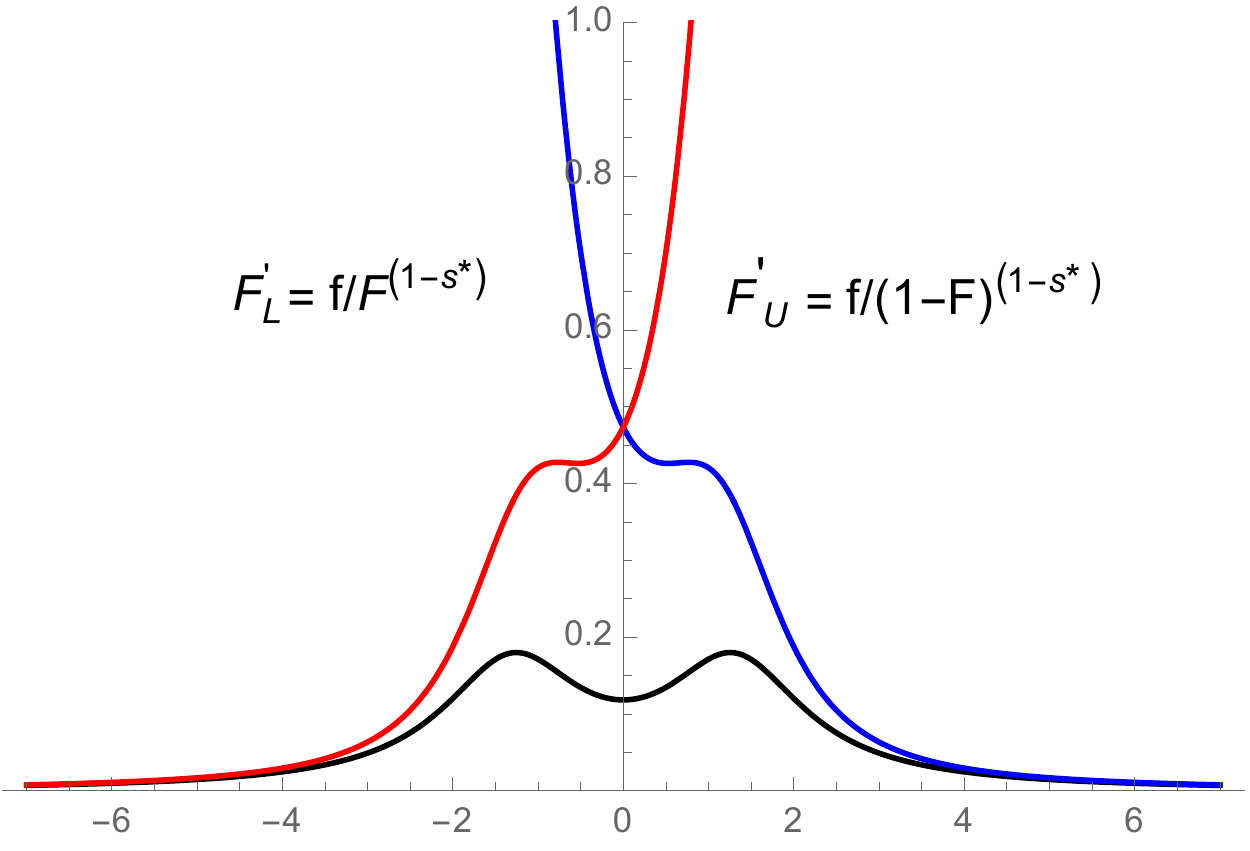} 
    \caption{The bi-$s^*$-concave $t_1$ mixture density function $f$ (black), $\delta = 1.3$, 
    with its bi-$s^*$-concave upper bounds 
    $F_U^{\prime}$ (red) and $F_L^{\prime}$ (blue) defined by (\ref{FprimeUpperSNeg}) and (\ref{FprimeLowerSNeg}).}
     \label{fig:fig2Mixed}
 \end{figure}

\begin{figure}[ht]
    \centering
    \includegraphics[width=\linewidth,height=5.5cm,keepaspectratio]{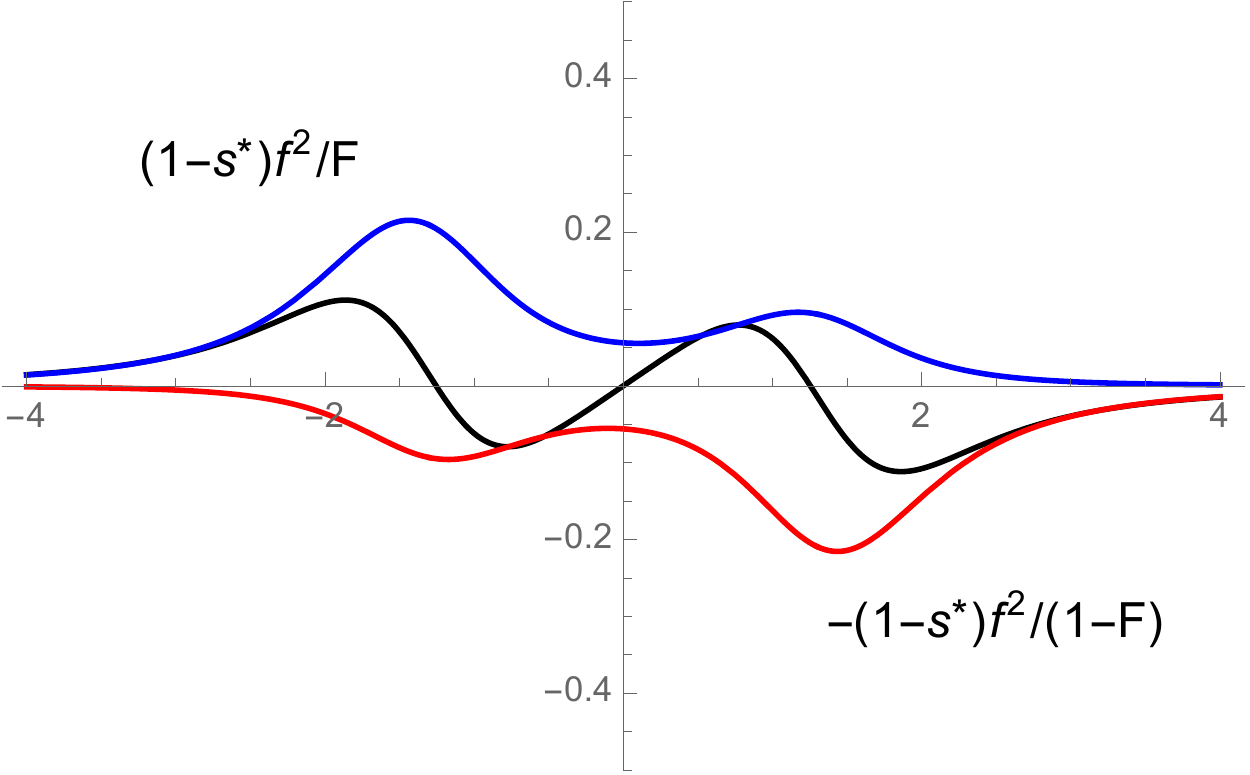} 
    \caption{The bi-$s^*$-concave $t_1$ mixture density function derivative $f^{\prime}$ (black) for $\delta = 1.3$ with its bi-$s^*$-concave 
     upper (blue) and lower (red) bounds as given in (iv) of Theorem~\ref{thm:1s}.}
     \label{fig:fig3Mixed}
 \end{figure}
 
 To get some feeling for what is happening with the Cs\"org\H{o} - R\'ev\'esz condition, 
 Figure~\ref{fig:fig4Mixed} gives plots of  the two functions 
\begin{eqnarray*}
 CR(x) & \equiv & F(x) (1-F(x)) \frac{f^{\prime} (x)}{f^2 (x)} ,\\
 CR_{min} (x) & = & \min\{ F(x), 1-F(x) \} \frac{f^{\prime} (x)} {f^2 (x)} .
 \end{eqnarray*}

\begin{figure}[ht]
    \centering
    \includegraphics[width=\linewidth,height=5.5cm,keepaspectratio]{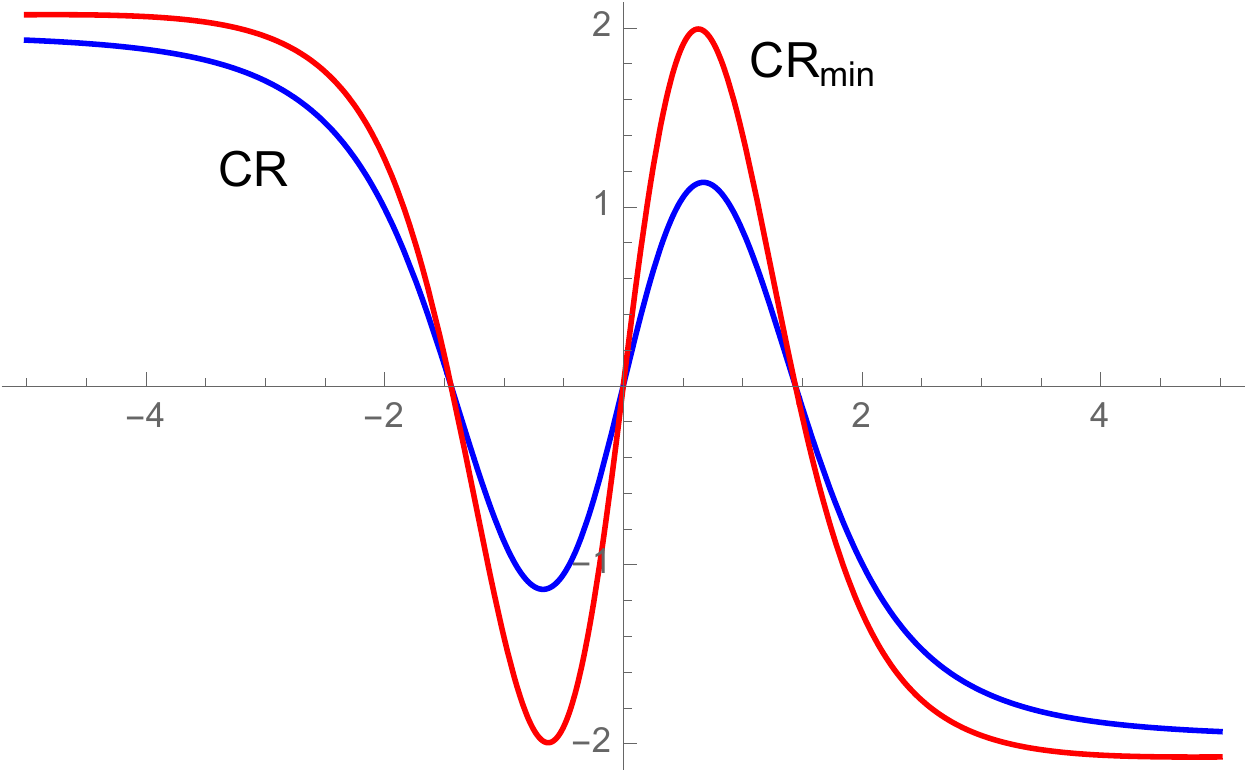} 
    \caption{The Cs\"org\H{o}-R\'ev\'esz functions $CR$ (blue) and  $CR_{min}$ (red) 
    for the mixed $t_1$ density with $\delta = 1.475$ \ .}
     \label{fig:fig4Mixed}
 \end{figure}

 \FloatBarrier
\newpage

\section{The bi-$s^*$-concave analogue of Theorem~\ref{thm:DKWthm}}
\label{sec:Theorem1s}

\subsection{Characterization theorem, bi-$s^*$-concave class}
\label{subsec:CharThm}

Now we can formulate the natural bi-$s^*$-concave analogue of Theorem~\ref{thm:DKWthm}.
   
\begin{theorem}
\label{thm:1s}
Let $s \in (-1,\infty]$. For a non-degenerate distribution function $F$ the following 
four statements are equivalent:\\
(i) \ $F$ is bi-$s^*$-concave.\\
(ii) \ $F$ is continuous on $\RR$ and differentiable on $J(F)$ with derivative $f=F^{\prime}.$ Moreover when $s\leq 0,$ 
\begin{eqnarray}\label{th12}
F(x+t) \left \{ \begin{array}{l}  \le F(x) \cdot \left ( 1+ s^* \frac{f(x)}{F(x)} t \right )_{+}^{1/s^*} \\ 
                \ge 1 - (1-F(x)) \cdot \left (1 - s^* \frac{f(x)}{1-F(x)} t \right )_{+}^{1/s^*} 
\end{array} \right. 
\end{eqnarray}
for all $x\in\RR$ and  $t \in \RR$. When $s>0,$
\begin{align}\label{th12n}
F(x+t)\begin{cases} \ \le F(x) \cdot \left ( 1+ s^* \frac{f(x)}{F(x)} t \right )_{+}^{1/s^*},& \text{ for}\ t\in(a-x,\infty)\\ 
               \ \ge 1 - (1-F(x)) \cdot \left (1 - s^* \frac{f(x)}{1-F(x)} t \right )_{+}^{1/s^*},& \text{ for}\  t\in(-\infty,b-x)
\end{cases}
\end{align}
for all $x \in J(F)$ .\\
(iii) \ $F$ is continuous on $\RR$ and differentiable on $J(F)$ with derivative $f = F^{\prime}$ such that the 
$s^*-$hazard function $ f/(1-F)^{1-s^*}$ is non-decreasing, and the 
reverse $s^*-$hazard function $  f/F^{1-s^*}$ is 
non-increasing on $J(F)$.\\
(iv) \ $F$ is continuous on $\RR$ and differentiable on $J(F)$ with bounded 
and strictly positive derivative $f = F^{\prime}$.  Furthermore, $f$ is locally Lipschitz-continuous on $J(F)$ with 
$L^1-$derivative  $f^{\prime} = F^{\prime \prime} $ satisfying 
\begin{eqnarray}\label{4eq3}
-(1-s^*)\frac{f^2}{1-F} \le f^{\prime} \le (1-s^*)\frac{f^2}{F} .
\end{eqnarray}
Recall that $s^* = s/(1+s) \in (-\infty, 1]$ and  $(1-s^*) = 1/(1+s) \in [0,\infty)$.  
Alternatively,
\begin{eqnarray*}
-\frac{f^2}{1-F} \le (1+s) f^{\prime} \le \frac{f^2}{F} .
\end{eqnarray*}
\end{theorem}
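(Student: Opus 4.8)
The plan is to mirror the architecture of the proof of Theorem~\ref{thm:DKWthm} in \cite{DuembgenKW:2017}: I would first isolate a preliminary regularity statement for a bi-$s^*$-concave $F$, after which (i), (ii) and (iii) become the same fact written in three ways, and (iv) is its second-order reformulation. Throughout, write $\phi \equiv F^{s^*}$ and $\psi \equiv (1-F)^{s^*}$, read as $\log F$ and $\log(1-F)$ when $s=0$; by Definition~\ref{defn:BiSConcaveDefn}, condition (i) says that $\phi,\psi$ are convex on $J(F)$ when $s\le 0$, while $\phi$ is concave on $(\inf J(F),\infty)$ and $\psi$ concave on $(-\infty,\sup J(F))$ when $s>0$. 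Wherever $F$ is differentiable one has $\phi' = s^* F^{s^*-1}f = s^*\, f/F^{1-s^*}$ and $\psi' = -s^*(1-F)^{s^*-1}f = -s^*\, f/(1-F)^{1-s^*}$, and this identity is the hinge of the whole argument.

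The first step would be to establish that a bi-$s^*$-concave $F$ is continuous on $\RR$ and differentiable on $J(F)$, with $f=F'$ bounded, strictly positive and locally Lipschitz on $J(F)$. When $s>0$ one begins by noting $s^*\in(0,1)$ and that the concavity of $\phi$ and of $\psi$, both of which are nonnegative, bounded and monotone, forces the support of $F$ to be a compact interval $[a,b]$, so the half-lines in Definition~\ref{defn:BiSConcaveDefn} are $(a,\infty)$ and $(-\infty,b)$. Continuity and differentiability are then argued as in \cite{DuembgenKW:2017}: interior jumps of $F$ are incompatible with convexity/concavity of $\phi$ or $\psi$ on an open interval (and for $s>0$ the half-line concavity kills the possible jumps at the endpoints of the support as well), and, convex/concave functions having one-sided derivatives everywhere and being differentiable off a countable set, the constraints from $\phi$ and from $\psi$ force the left and right derivatives of $F$ to agree at every point of $J(F)$. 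Granting the monotonicity of (iii) (proved next), one upgrades $f$ to bounded by writing $f=F^{1-s^*}(f/F^{1-s^*})$ with $F^{1-s^*}\le 1$ and $f/F^{1-s^*}$ non-increasing to the right of a fixed point of $J(F)$, symmetrically to the left using $f/(1-F)^{1-s^*}$; to strictly positive because a zero of $f$ propagates through the non-increasing reverse $s^*$-hazard and freezes $F$; and to locally Lipschitz because the two monotonicities sandwich $f(y)$ between $f(x)\big((1-F(y))/(1-F(x))\big)^{1-s^*}$ and $f(x)\big(F(y)/F(x)\big)^{1-s^*}$ for $x,y$ in a compact subinterval of $J(F)$, where $u\mapsto u^{1-s^*}$ is Lipschitz near $1$ and $F$ is Lipschitz.

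With regularity in hand the remaining equivalences are calculus. For (i) $\Leftrightarrow$ (iii): since $F$ is differentiable and positive on $J(F)$, $\phi$ is convex ($s<0$, so $s^*<0$) iff $\phi'$ is non-decreasing iff $f/F^{1-s^*}$ is non-increasing; $\phi$ is concave ($s>0$, so $s^*>0$) iff again $f/F^{1-s^*}$ is non-increasing; and $(\log F)'=f/F$ is non-increasing when $s=0$; the symmetric computation for $\psi$ gives that $f/(1-F)^{1-s^*}$ is non-decreasing. For (i)/(iii) $\Leftrightarrow$ (ii): the inequalities (\ref{th12}) and (\ref{th12n}) are exactly ``$\phi(x+t)$ lies on the convex (resp.\ concave) side of the tangent to $\phi$ at $x$'', i.e.\ $\phi(x+t)\ge\phi(x)+\phi'(x)t$ (resp.\ $\le$), after inserting $\phi'(x)=s^* F^{s^*-1}(x)f(x)$, dividing by $F^{s^*}(x)>0$ and raising to the power $1/s^*$ — the sense of the inequality and the truncation $(\cdot)_+$ being dictated by the sign of $s^*$ and of $1/s^*$, and the $s^*\to0$ limit recovering the exponential bounds of Theorem~\ref{thm:DKWthm}(ii). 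For (iii) $\Leftrightarrow$ (iv): with $f$ locally Lipschitz and $F\in C^1$ on $J(F)$, the functions $f/F^{1-s^*}$ and $f/(1-F)^{1-s^*}$ are locally absolutely continuous, so their monotonicity is equivalent to a.e.\ sign conditions on their derivatives, which rearrange to $f'F-(1-s^*)f^2\le0$ and $f'(1-F)+(1-s^*)f^2\ge0$, i.e.\ (\ref{4eq3}); conversely, integrating (\ref{4eq3}) restores the monotonicity of (iii), integrating $\phi',\psi'$ restores the convexity/concavity of (i), and both (ii) and (iv) already build the continuity and differentiability of $F$ into their hypotheses.

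The hard part will be the regularity step: extracting honest differentiability, boundedness, strict positivity and local Lipschitzness of $f$ from the bare shape constraint. This genuinely requires using both halves of bi-$s^*$-concavity ($\phi$ \emph{and} $\psi$) in tandem, and the $s>0$ case needs in addition the preliminary identification of the support as a compact interval together with control of the endpoint behaviour, where $\phi=F^{s^*}$ may have infinite one-sided slope. Once $f$ is known to be locally Lipschitz, the measure-theoretic ingredients — local bounded variation / absolute continuity, a.e.\ differentiability, the identity $f(y)-f(x)=\int_x^y f'$ — are routine, and what remains is sign bookkeeping with $s^*$ and the truncations $(\cdot)_+^{1/s^*}$.
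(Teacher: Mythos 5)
Your proposal is correct in substance and follows essentially the same DKW-parallel strategy as the paper: continuity and differentiability from convexity/concavity of $F^{s^*}$ and $(1-F)^{s^*}$ via one-sided derivatives, the tangent-line inequalities for (ii), monotonicity of $h=f/F^{1-s^*}$ and $\tilde h=f/(1-F)^{1-s^*}$ for (iii), and the second-order bound \eqref{4eq3} for (iv). Where you genuinely diverge is in organization and in how (iii)$\Leftrightarrow$(iv) is closed: the paper runs the cycle (i)$\Rightarrow$(ii)$\Rightarrow$(iii)$\Rightarrow$(iv)$\Rightarrow$(i), gets local Lipschitzness by bounding difference quotients of $f=F^{1-s^*}h$, and returns from (iv) to (iii) via the $\limsup$ criterion of Lemma 7 of \cite{DuembgenKW:2017}; you instead prove a regularity lemma first, obtain Lipschitzness from the two-sided sandwich $f(x)\bigl((1-F(y))/(1-F(x))\bigr)^{1-s^*}\le f(y)\le f(x)\bigl(F(y)/F(x)\bigr)^{1-s^*}$ (for $y>x$), and deduce (iv)$\Rightarrow$(iii) from local absolute continuity of $h,\tilde h$ together with the a.e.\ sign of their derivatives. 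Your variant is slightly cleaner in that only $1-s^*>0$ enters, so it sidesteps the paper's separate bookkeeping when $s>1$ (where $1-2s^*<0$ changes the Lipschitz constants), at the price of leaving those constants implicit. Two details to tighten (the first is delicate in the paper as well): for $s\le 0$ your continuity argument only excludes interior jumps, but $J(F)$ need not be open, so a possible atom at $\inf J(F)$ must be ruled out by treating $F^{s^*}$ and $(1-F)^{s^*}$ as extended-real-valued convex functions on all of $\RR$, as the paper does; and for (iii)$\Rightarrow$(i) with $s>0$ you should say explicitly why concavity of $F^{s^*}$ on $J(F)$ extends to $(\inf J(F),\infty)$ (it is constant equal to $1$ beyond $\sup J(F)$), with the boundary case $s=\infty$ ($s^*=1$, uniform $f$) handled by a one-line separate remark as in the paper.
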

\smallskip

This yields the following corollary extending (\ref{GammaFboundedbyOneForLogConcaveF}) 
from $s=0$ to $s \in (-1,\infty]$.

\begin{corollary}
\label{cor:CR-gammaBound} 
Suppose that $F$ is bi-$s^*$-concave for $s \in (-1, \infty]$.  
Then 
\begin{eqnarray*}
\gamma (F) = \sup_{x \in J(F)} F(x) (1-F(x)) \frac{| f' (x)|}{f^2(x)} & \le & 1- s^* = \frac{1}{1+s} , 
\end{eqnarray*} 
and 
\begin{eqnarray*}
\tilde{\gamma} (F) = \sup_{x \in J(F)} \min\{ F(x) , 1-F(x) \}  \frac{| f' (x)|}{f^2(x)} & \le & 1- s^*  = \frac{1}{1+s} .
\end{eqnarray*}
\end{corollary}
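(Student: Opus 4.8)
The plan is that the entire corollary drops out of part~(iv) of Theorem~\ref{thm:1s} by elementary algebra; no idea beyond the characterization theorem is needed. First I would record what~(iv) supplies: since $F$ is bi-$s^*$-concave, on $J(F)$ the derivative $f=F'$ exists, is strictly positive and bounded, is locally Lipschitz, and its $L^1$-derivative satisfies
\[
-(1-s^*)\,\frac{f^2}{1-F}\ \le\ f'\ \le\ (1-s^*)\,\frac{f^2}{F},
\]
with $1-s^* = 1/(1+s) \in [0,\infty)$ and the convention $1/(1+\infty)=0$ when $s=\infty$.

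Next I would establish the stronger of the two bounds, the one on $\tilde\gamma(F)$, by splitting on the sign of $f'(x)$. If $f'(x)\ge 0$, divide the right-hand inequality by $f^2(x)>0$ and multiply by $F(x)$ to get $F(x)\,|f'(x)|/f^2(x)\le 1-s^*$; if $f'(x)<0$, divide the left-hand inequality by $f^2(x)$ and multiply by $1-F(x)$ to get $(1-F(x))\,|f'(x)|/f^2(x)\le 1-s^*$. Since in the first case $\min\{F(x),1-F(x)\}\le F(x)$ and in the second $\min\{F(x),1-F(x)\}\le 1-F(x)$, in both cases $\min\{F(x),1-F(x)\}\,|f'(x)|/f^2(x)\le 1-s^*$; taking the supremum over $x\in J(F)$ gives $\tilde\gamma(F)\le 1-s^* = 1/(1+s)$. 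The bound on $\gamma(F)$ then follows for free from $F(x)(1-F(x)) = \min\{F(x),1-F(x)\}\cdot\max\{F(x),1-F(x)\}\le \min\{F(x),1-F(x)\}$, whence $\gamma(F)\le\tilde\gamma(F)\le 1-s^*$; alternatively one obtains it directly by multiplying each inequality of~(iv) through by $F(x)(1-F(x))/f^2(x)$ and using $F,1-F\le 1$.

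There is essentially no obstacle here; the only point needing a word of care is that~(iv) furnishes $f'$ merely as an $L^1$-derivative, so it exists only almost everywhere. Accordingly the suprema defining $\gamma(F)$ and $\tilde\gamma(F)$ should be read over the full-measure set of differentiability points of $f$, and on that set the two inequalities of~(iv) hold and the argument above is purely arithmetic. The degenerate endpoint $s=\infty$ is covered by the same computation under $1/(1+\infty)=0$, which forces $f'\equiv 0$ and hence $\gamma(F)=\tilde\gamma(F)=0$, consistent with $F$ being uniform.
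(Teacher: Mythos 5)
Your proof is correct and follows essentially the same route as the paper: the corollary is obtained directly from part (iv) of Theorem~\ref{thm:1s} by multiplying the two inequalities on $f'$ through by the appropriate nonnegative factors (with the sign split on $f'$ for $\tilde{\gamma}$), exactly paralleling the rearrangement the paper carries out after Theorem~\ref{thm:DKWthm} in the $s=0$ case. Your remarks on the a.e.\ interpretation of $f'$ and the $s=\infty$ endpoint match the paper's treatment as well.
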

\smallskip
 
\begin{remark}
\label{rem:SW-connection}
The three distribution functions $F$  considered by \cite{MR838963, MR3396731} 
page 644 all involved log-concave densities 
with the resulting bound for $\gamma (F)$ being $1$.  Theorem~\ref{thm:1s} and 
Corollary~\ref{cor:CR-gammaBound} give a rather complete description 
of how the values of $\gamma(F)$ and $\tilde{\gamma} (F)$  depend on the index $s^*$ of bi-$s^*$-concavity.
\end{remark}

\par\noindent
\begin{proof}[Proof of Theorem~\ref{thm:1s}]
If $s=0$, the proof follows from Theorem~\ref{thm:DKWthm} of \cite{DuembgenKW:2017}.  
When $s=\infty$, $s^*=1$ and $1-s^* = 0$.  In this case $f'=0$ almost everywhere (Lebesgue) and 
$f$ is a uniform density on $(a,b)$.  When $s \in (0,\infty)$ the proof is essentially the same as for $s=0$
with only two minor modifications (in the proof of (i) implies (ii) and in the proof of (iii) implies (iv));  
see the Appendix section~\ref{sec:appendix} for complete 
details.
It remains to consider the case 
when $s\in(-1,0)$.  Our proof closely parallels the proof 
for the case $s=0$ given by  \cite{DuembgenKW:2017}.
Throughout our proof we will denote $\inf J(F)$ and $\sup J(F)$ by $a$ and $b$ 
respectively. Notice that if $F$ is continuous, $J(F)=(a,b).$

  Proof of (i) implies (ii):
 Since $F$ is bi-$s^*$-concave with $s^{*}<0$, $\psi=F^{1/\st}$ is convex on $J(F)$. 
 Since $\psi(x)=1$ and $\infty$ for $x\geq\sup J(F)$ and $x\le\inf J(F)$ respectively,  $\psi$
 is convex on $\RR.$ By the convex version of Lemma $6$ of \cite{DuembgenKW:2017} 
  $\psi$ is continuous on the interior of $\{\psi<\infty\}.$ Therefore $\psi$ and hence $F$ is continuous 
  on the interior of the set $\{F>0\}$ or $(a,\infty)$. 
  Similarly, the $s^*$-concavity of $1-F$ 
  implies continuity of $1-F$ on the interior of the set $\{1-F>0\}:=(-\infty,b)$ where 
  $b:=\sup\{F<1\}$.  
  However unless $a<b$, $F$ would be degenerate. Hence, $a<b$ and $F$ is continuous on $\RR$. 
  More precisely $J(F)=(a,b)$.  
  
 Let $x\in(a,b)$. Convexity of $\psi$ implies that 
  \begin{eqnarray*}
  F'(x\pm)=\lim_{t\to 0,\pm t>0}\dfrac{\psi^{1/s^*}(x+t)-\psi^{1/s^*}(t)}{t}=\dfrac{1}{s^*}\psi(x)^{1/s^*-1}\psi'(x\pm)
  \end{eqnarray*}
  exist and satisfy
  \begin{eqnarray*}
  F'(x-) \le F'(x+) .
  \end{eqnarray*}
 Similarly, convexity of $(1-F)^{s^*}$ yields 
  \begin{eqnarray*}
  (1-F)'(x+)\geq (1-F)'(x-)
  \end{eqnarray*}
  which implies that
  \begin{eqnarray*}
  -F'(x+)\geq -F'(x-).
  \end{eqnarray*}
Therefore $F'(x-)=F'(x+)$ which proves the differentiability of $F$. 
It also shows that $\psi'(x+)=\psi'(x-)=\psi'(x)$ on $(a,b)$.
  
By Lemma 6 (convex version) 
    of \cite{DuembgenKW:2017} for each $x\in(a,b)$ and $c\in[\psi'(x-),\psi'(x+)]$ one has
    \begin{eqnarray*}
    \psi(x+t) -\psi(x)\geq ct\ \text{ for all }t\in\RR.
    \end{eqnarray*}
    Therefore
    \begin{eqnarray*}
    \psi(x+t)-\psi(x)\geq t \psi'(x).
    \end{eqnarray*}
    Hence, 
    \begin{eqnarray*}
    F^{s^*}(x+t)-F^{s^*}(x)\geq ts^{*}f(x)F(x)^{s^*-1} ,
    \end{eqnarray*}
    or, with $x_{+}=\max\{x,0\}$,
    \begin{eqnarray*}
    \dfrac{F^{s^*}(x+t)}{F^{s^*}(x)}\geq\bigg ( 1+s^*\dfrac{f(x)}{F(x)}t\bigg )_{+}.
    \end{eqnarray*}
    Hence,
\begin{eqnarray*}
    \dfrac{F(x+t)}{F(x)}\leq\bigg ( 1+s^*\dfrac{f(x)}{F(x)}t\bigg )_{+}^{1/s^*}.
\end{eqnarray*}
     Analogously it follows that
\begin{eqnarray*}
    (1-F(x+t))^{s^*}-(1-F(x))^{s^*} \geq -ts^*f(x)(1-F(x))^{s^*-1}
\end{eqnarray*}
   which yields
\begin{eqnarray*}
    \bigg(\dfrac{1-F(x+t)}{1-F(x)}\bigg )^{s^*}\geq \bigg (1-ts^*\dfrac{f(x)}{1-F(x)}\bigg)_{+}
\end{eqnarray*}
    or
\begin{eqnarray*}
    F(x+t)\geq 1-(1-F(x))\cdot \bigg (1-ts^*\dfrac{f(x)}{1-F(x)}\bigg)_{+}^{1/s^*}.
\end{eqnarray*}
    Hence \eqref{th12} is proved.
    
Since
    (ii) holds, $F$ is continuous and differentiable on $J(F)$ with derivative $f=F'$ 
    and satisfies \eqref{th12}. Now let $x,y\in J(F)$ with $x<y.$  
    Let 
\begin{eqnarray}
    h=f/F^{1-s^*}.
    \label{defh}
\end{eqnarray} 
    Then applying \eqref{th12} we obtain that
\begin{eqnarray*}
    \dfrac{F^{s^*}(x)}{F^{s^*}(y)}\geq 1+s^*\dfrac{f(y)}{F(y)}(x-y).
\end{eqnarray*}
    Hence, 
\begin{eqnarray*}
    F^{s^*}(x)
    & \geq & F^{s^*}(y)+s^*\dfrac{f(y)}{F(y)^{1-s^*}}(x-y)\\
    & = &  F^{s^*}(y)+s^*h(y)(x-y)\\
    &\geq & \ F^{s^*}(x)+s^*h(x)(y-x)+s^*h(y)(x-y).
\end{eqnarray*}
    Therefore 
\begin{eqnarray*}
    s^{*}(x-y)(h(y)-h(x))\leq 0
\end{eqnarray*} 
    where $s^{*}(x-y)>0$,  implying that $h(y)\leq h(x)$. 
    Therefore $h$ is non-increasing. Now let 
\begin{eqnarray}
    \tilde{h}=f/(1-F)^{1-s^{*}}. \label{defth}
\end{eqnarray} 
     From \eqref{th12} we also obtain that 
\begin{eqnarray*}
    (1-F(x))^{s^{*}}-(1-F(y))^{s^*}\geq -ts^{*}\dfrac{f(y)}{(1-F(y))^{1-s^*}}=-ts^{*}\tilde{h}(y)
\end{eqnarray*}
    or
\begin{eqnarray*}
      (1-F(x))^{s^{*}}& \geq & (1-F(y))^{s^*}  -(x-y)s^{*}\tilde{h}(y)\\
         & = & (1-F(x))^{s^*}-(y-x)s^{*}\tilde{h}(x)  -(x-y)s^{*}\tilde{h}(y)\\
         & = & (1-F(x))^{s^*}-s^{*}(y-x)(\tilde{h}(y)-\tilde{h}(x) ).\\
\end{eqnarray*}
    Since $s^{*}(y-x)<0,$ the last inequality leads to
\begin{eqnarray*}
    0\leq \tilde{h}(y)-\tilde{h}(x), 
\end{eqnarray*} 
    implying that $\tilde{h}$ is non-decreasing. 

Proof of (iii) implies (iv): 
   If the conditions of (iii) hold, then it immediately follows that $f>0$ on $J(F)$. 
    If not, suppose that $f(x_0) = 0$ for some $x_0\in J(F)$.  Now $J(F)=(a,b)$ since $F$ is continuous.
    Since $f(x)/F(x)^{1-s^*}$ is non-increasing, 
    $f(x)=0$ for $x\in [x_0,b).$ Similarly since $f(x)/(1-F(x))^{1-s^*}$ is non-decreasing 
    we obtain $f(x)=0$ for $x\in(a, x_0].$ 
    Therefore, $F'=0$  or $F$ is constant on $J(F)$. Then $F$ violates the continuity 
    condition of (iii). Hence $f>0$ on $J(F)$.  
  
Suppose $h$ and $\tilde{h}$ are as defined in \eqref{defh} and \eqref{defth}. 
    Then the monotonicities of $h$ and $\tilde{h}$ imply that for any $x,x_0\in J(F),$
\begin{eqnarray*}
      f(x)=\begin{cases}
      F^{1-s^*}(x)h(x)\leq h(x_0) &  \ \mbox{if} \ \  x\geq x_0,\\
      (1-F(x))^{1-s^*}\tilde{h}(x)\leq \tilde{h}(x_0) & \ \mbox{if} \ \  x\leq x_0.
      \end{cases}
 \end{eqnarray*}
 Next, let $c,d \in J(F)$ with $c<d$. 
 We will  bound $(f(y)-f(x))/(y-x)$ for $x,y\in J(F)$ such that $x,y\in(c,d)$ with $x \neq y$.
  This will yield local Lipschitz-continuity of $f$ on $J(F)$. 
  To this end, note that
\begin{eqnarray*}
 \dfrac{f(y)-f(x)}{y-x}
 & = & \ \dfrac{F^{1-s^*}(y)h(y)-F^{1-s^*}(x)h(x)}{y-x}\\
 & = & \  h(y)\dfrac{F^{1-s^*}(y)-F^{1-s^*}(x)}{y-x}+F^{1-s^*}(x)\dfrac{h(y)-h(x)}{y-x}\\
 & \leq & \  h(c)\dfrac{F^{1-s^*}(y)-F^{1-s^*}(x)}{y-x}\\
 & \to &  \  h(c) (1-s^*)f(x) F^{-s^*}(x)=(1-s^{*})h(c)h(x)F^{1-2s^{*}}(x)
\end{eqnarray*} 
as $y\to x$.  Here the inequality followed from the fact that 
\begin{eqnarray*}
\dfrac{h(y)-h(x)}{y-x}\leq 0
\end{eqnarray*} 
which holds since $h$ is non-increasing.
Now  since $h(x)\leq h(c)$, $1-2 s^{*}>0$, $1-s^{*} >0$, and $F(x)\leq F(d)$, we find that
\begin{eqnarray*}
\limsup_{y\to x}\dfrac{f(y)-f(x)}{y-x}\leq (1-s^*)h(c)^2F^{1-2s^*}(d)
\label{4eq1}
\end{eqnarray*}
 for all $x\in(c,d)$.
Analogously with $\bar{F}=1-F$ we obtain that
\begin{eqnarray*}
\dfrac{f(y)-f(x)}{y-x}
& = & \ \dfrac{\bar{F}^{1-s^*}(y)\tilde{h}(y)-\bar{F}^{1-s^*}(x)\tilde{h}(x)}{y-x}\\
& = & \tilde{h}(y)\dfrac{\bar{F}^{1-s^*}(y)-\bar{F}^{1-s^*}(x)}{y-x}+\bar{F}^{1-s^*}(x)\dfrac{\tilde{h}(y)-\tilde{h}(x)}{y-x}\\
& \geq & \  \tilde{h}(y)\dfrac{\bar{F}^{1-s^*}(y)-\bar{F}^{1-s^*}(x)}{y-x}
\end{eqnarray*} 
since, by the non-decreasing property of $\tilde{h}$, for any $x,y\in J(F)$, 
\begin{eqnarray*}
\dfrac{\tilde{h}(y)-\tilde{h}(x)}{y-x}>0.
\end{eqnarray*} 
Next observe that since $1-s^*=1/(1+s)>0$, and $\bar{F}$ is nonincreasing, 
\begin{eqnarray*}
\tilde{h}(y)\dfrac{\bar{F}^{1-s^*}(y)-\bar{F}^{1-s^*}(x)}{y-x}\geq \tilde{h}(d)\dfrac{\bar{F}^{1-s^*}(y)-\bar{F}^{1-s^*}(x)}{y-x}.
\end{eqnarray*}
Hence as $y\to x$ it follows that   
\begin{eqnarray*}
\liminf_{y\to x}\dfrac{f(y)-f(x)}{y-x}
& \geq &-\tilde{h}(d)(1-s^*)f(x)\bar{F}^{-s^*}(x)\\
& = & -\tilde{h}(d)\tilde{h}(x)(1-s^*)\bar{F}^{1-2s^*}(x).
\end{eqnarray*}
Therefore using the fact that $\tilde{h}(x)\leq \tilde{h}(d)$ and $1-2s^*>0$ we conclude that 
\begin{eqnarray*}
\liminf_{y\to x}\dfrac{f(y)-f(x)}{y-x}\geq -\tilde{h}(d)^2(1-s^*)\bar{F}^{1-2s^*}(c).
\label{4eq2}
\end{eqnarray*}
Combining the above with \eqref{4eq1} we find that $f$ is Lipschitz-continuous on $(c,d)$ with Lipschitz-constant 
\begin{eqnarray*}
\max\{(1-s^*)h(c)^2F^{1-2s^*}(d),(1-s^*)\tilde{h}(d)^2\bar{F}^{1-2s^*}(c)\}.
\end{eqnarray*} 
This proves that $f$ is locally Lipschitz continuous on $J(F)$. 
Hence, $f$ is also locally absolutely continuous with $L^1$-derivative $f'$ such that
\begin{eqnarray*}
f(y)-f(x)=\int_{x}^{y}f'(t)dt\ \text{  for all }x,y \in J(F);
\end{eqnarray*}
hence $f'(x)$ can be chosen so that
\begin{eqnarray*}
f'(x)\in\bigg[\liminf_{y\to x}\dfrac{f(y)-f(x)}{y-x},\limsup_{y\to x}\dfrac{f(y)-f(x)}{y-x}\bigg].
\end{eqnarray*}
However \eqref{4eq1} and \eqref{4eq2} imply that for $c<x<d$,
\begin{eqnarray*}
\lefteqn{
\bigg[\liminf_{y\to x}\dfrac{f(y)-f(x)}{y-x},\limsup_{y\to x}\dfrac{f(y)-f(x)}{y-x}\bigg]}\\
&& \subset \bigg[-(1-s^*)\tilde{h}(d)^2\bar{F}^{1-2s^*}(c),(1-s^*)h(c)^2F^{1-2s^*}(d)\bigg]
\end{eqnarray*}
Now since $f$ and $F$ are continuous and $F>0$ on $J(F),$ so are $h$ and $\tilde{h}$. 
Therefore,  letting $c,d\to x$ it follows that 
\begin{eqnarray*}
\dfrac{-(1-s^*)f(x)^2\bar{F}^{1-2s^*}(x)}{\bar{F}^{2-2s^*}(x)}\leq f'(x)\leq (1-s^*)\dfrac{f(x)^2F^{1-2s^*}(x)}{F^{2-2s^*}(x)};
\end{eqnarray*}  
and this implies \eqref{4eq3}.
          
Proof of (iv) implies (i):
 The fact that (iii) implies (i) can be easily proved  since $f/F^{1-s^*}$ non-increasing on 
 $J(F)$ implies that   $F^{s^*}$ is convex on $J(F).$ Also $1< F^{s^*}<\infty$ on $J(F)$.
 Now $F^{s^*}(x)=\infty$ for $x< \inf J(F)$ and $F^{s^*}(x)=1$ for $x> \sup J(F)$. 
 Therefore $F^{s^*}$ is convex on $\RR$.
 Similarly one can show that $(1-F)^{s^*}$ is convex on $\RR$.
 Hence $F$ is bi-$s^*$-concave. Therefore it is enough to prove that (iv) implies (iii).
 
 By Lemma $7$ of \cite{DuembgenKW:2017} $h$ is non-increasing on $J(F)$ 
 if and only if for any $x\in J(F)$ the following holds:
 \begin{eqnarray*}
 \limsup_{y\to x}\dfrac{h(y)-h(x)}{y-x}\leq 0.
 \end{eqnarray*}
 Suppose $x\neq y\in J(F)$ and $r:=\min(x,y)$ and $s:=\max(x,y).$ Then it follows that
 \begin{eqnarray*}
 \lefteqn{\dfrac{h(y)-h(x)}{y-x} =  \dfrac{f(y)/F^{1-s^*}(y)-f(x)/F^{1-s^*}(x)}{y-x}}\\
 && =\  \dfrac{1}{F^{1-s^*}(y)}\dfrac{f(y)-f(x)}{y-x}-\dfrac{f(x)}{F^{1-s^*}(x)F^{1-s^*}(y)}\dfrac{F^{1-s^*}(y)-F^{1-s^*}(x)}{y-x}\\
 && = \  \dfrac{1}{F^{1-s^*}(y)}\dfrac{\int_{r}^{s}f'(t)dt}{s-r}-\dfrac{f(x)}{F^{1-s^*}(x)F^{1-s^*}(y)}\dfrac{F^{1-s^*}(y)-F^{1-s^*}(x)}{y-x}\\
 && \leq \  \dfrac{(1-s^*)}{F^{1-s^*}(y)(s-r)}\int_{r}^{s}\dfrac{f(t)^2}{F(t)}dt-\dfrac{f(x)}{F^{1-s^*}(x)F^{1-s^*}(y)}\dfrac{F^{1-s^*}(y)-F^{1-s^*}(x)}{y-x}.
 \end{eqnarray*}
  by \eqref{4eq3}. 
  Since $F$ is continuous by (iv), $J(F)$ must be an interval. 
  Also since $x,y\in J(F),$ $[r,s]\subset J(F)$.  Since $f$ and $F$ are continuous on 
  $J(F)$ and $F>0$ on $J(F)$,  $f^2/F$ is continuous and integrable on $J(F)$ and hence also on $[r,s]$.    
  Letting $y\to x$ we obtain that
 \begin{eqnarray*}
 \limsup_{y\to x}\dfrac{h(y)-h(x)}{y-x}\leq \dfrac{(1-s^*)f(x)^2}{F^{2-s^*}(x)}-\dfrac{(1-s^*)f(x)^2}{F^{2-s^*}(x)}=0.
 \end{eqnarray*}
 Analogously by Lemma $7$ of \cite{DuembgenKW:2017}, to show $\tilde{h}$ is non-decreasing it is enough to show that
 \begin{eqnarray*}
 \liminf_{y\to x}\dfrac{\tilde{h}(y)-\tilde{h}(x)}{y-x}\geq 0.
 \end{eqnarray*}
 To verify this suppose $x\neq y\in J(F)$ and $r:=\min(x,y)$ and $s:=\max(x,y)$. 
 As before we calculate
  \begin{eqnarray*}
 \dfrac{\tilde{h}(y)-\tilde{h}(x)}{y-x}
 & = &\dfrac{f(y)/\bar{F}^{1-s^*}(y)-f(x)/\bar{F}^{1-s^*}(x)}{y-x}\\
 & = & \dfrac{1}{F^{1-s^*}(y)}\dfrac{f(y)-f(x)}{y-x}-\dfrac{f(x)}{\bar{F}^{1-s^*}(x)\bar{F}^{1-s^*}(y)}\dfrac{\bar{F}^{1-s^*}(y)-\bar{F}^{1-s^*}(x)}{y-x}\\
 & = &  \dfrac{1}{\bar{F}^{1-s^*}(y)}\dfrac{\int_{r}^{s}f'(t)dt}{s-r}-\dfrac{f(x)}{\bar{F}^{1-s^*}(x)\bar{F}^{1-s^*}(y)}\dfrac{\bar{F}^{1-s^*}(y)-\bar{F}^{1-s^*}(x)}{y-x}\\
 & \geq &   - \ \dfrac{(1-s^*)}{\bar{F}^{1-s^*}(y)(s-r)}\int_{r}^{s}\dfrac{f(t)^2}{\bar{F}(t)}dt-\dfrac{f(x)}{\bar{F}^{1-s^*}(x)\bar{F}^{1-s^*}(y)}\dfrac{\bar{F}^{1-s^*}(y)-\bar{F}^{1-s^*}(x)}{y-x}
 \end{eqnarray*}
 by \eqref{4eq3}. Since $f$ and $\bar{F}$ are continuous on 
 $J(F)$,  letting $y\to x$ it follows that
 \begin{eqnarray*}
 \liminf_{y\to x}\dfrac{\tilde{h}(y)-\tilde{h}(x)}{y-x} 
 \geq  -\dfrac{(1-s^*)f(x)^2}{\bar{F}^{2-s^*}(x)}+\dfrac{(1-s^*)f(x)^2}{\bar{F}^{2-s^*}(x)}=0.
 \end{eqnarray*}
\end{proof}
\smallskip

\subsection{Bounds for $F$ bi-$s^*$-concave when $s<0$.}
\label{subsec:boundsSNeg}  

First, upper and lower bounds on $F$:   Note that $(1+y)^r \ge 1+ ry$ for any $r < 0$ and $y \ge -1$.
Taking $y = -F(x)$ and $r = s^*$ yields 
\begin{eqnarray*}
(1-F(x))^{s^*} \ge 1- s^* F(x) 
\end{eqnarray*}
or, by rearranging,
\begin{eqnarray}
F(x) \le \frac{1}{-s^*} \left \{ (1-F(x))^{s^*} - 1 \right \} \equiv F_{U,s} (x) \equiv F_U (x)
\label{FUpperSNeg}
\end{eqnarray}
where $F_U$ is a convex function if $F$ is bi$-s^*-$concave.  
Similarly, taking $y= - (1-F(x))$ and $r=s^*$ yields, by rearranging terms
\begin{eqnarray}
F(x) \ge \frac{1}{-s^*} \left \{ (1-s^*) - F(x)^{s^*} \right \} \equiv F_{L,s} (x) \equiv F_L (x) 
\label{FLowerSNeg}
\end{eqnarray}
where $F_L$ is a concave function if $F$ is bi$-s^*-$concave.  
Note that 
\begin{eqnarray}
F_U^{\prime} (x) =  \frac{f(x)}{(1-F(x))^{1-s^*}}  = \frac{f(x)}{(1-F(x))^{1/(1+s)}} 
\label{FprimeLowerSNeg}
\end{eqnarray}
is monotone non-decreasing, while
\begin{eqnarray}
 F_L^{\prime} (x)  =  \frac{f(x)}{F^{1-s^*}(x)}  = \frac{f(x)}{F^{1/(1+s)} (x)}  
\label{FprimeUpperSNeg}
\end{eqnarray} 
is monotone non-increasing.
Therefore 
\begin{eqnarray*}
&& 0 \le F_U^{\prime \prime} (x) =  (1-F(x))^{s^*-2} \left \{ (1-s^*) f^2 (x) + (1-F(x)) f^{\prime} (x) \right \}, \\
&& 0 \ge F_L^{\prime \prime} (x)  =   F(x)^{s^*-2} \left \{ (s^*-1) f^2 (x) + F(x) f^{\prime}(x) \right \} . 
\end{eqnarray*}
The upper and lower bounds in (iv) of Theorem~\ref{thm:1s} follow by rearranging these inequalities.

Taking $F$ to be the distribution function of  $t_1$ and plotting the bounds  for $F$, $F^{\prime} = f$ 
and $F^{\prime \prime} = f^{\prime}$ yields the following three figures.  

\begin{figure}[ht]
    \centering
    \includegraphics[width=\linewidth,height=5.5cm,keepaspectratio]{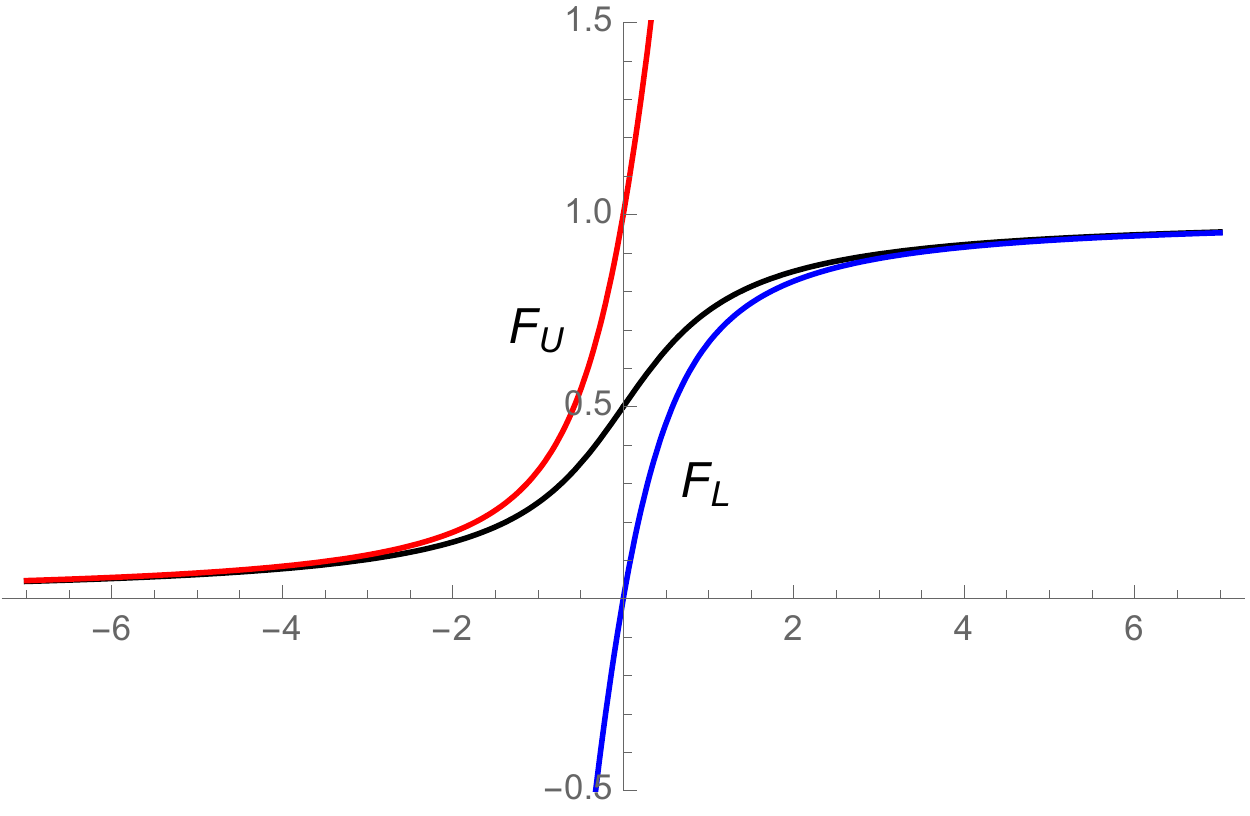}
    \caption{The bi$-s^*-$concave $t_1$ distribution function $F$ (black) with its  
    convex upper bound $F_U$ (red) and concave lower bound $F_L$ (blue), 
    where $F_U$ and $F_L$ are given in (\ref{FUpperSNeg}) and (\ref{FLowerSNeg}).}
    \label{fig:fig1}
 \end{figure}

Upper  bounds for the density $f = F^{\prime} $ of $F$ follow from (iii)  of Theorem~\ref{thm:1s}: 
These bounds are illustrated for the bi-$s^*$-concave distribution $t_1$ in Figure~\ref{fig:fig2}.
 
  \begin{figure}[ht]
    \centering
    \includegraphics[width=\linewidth,height=5.5cm,keepaspectratio]{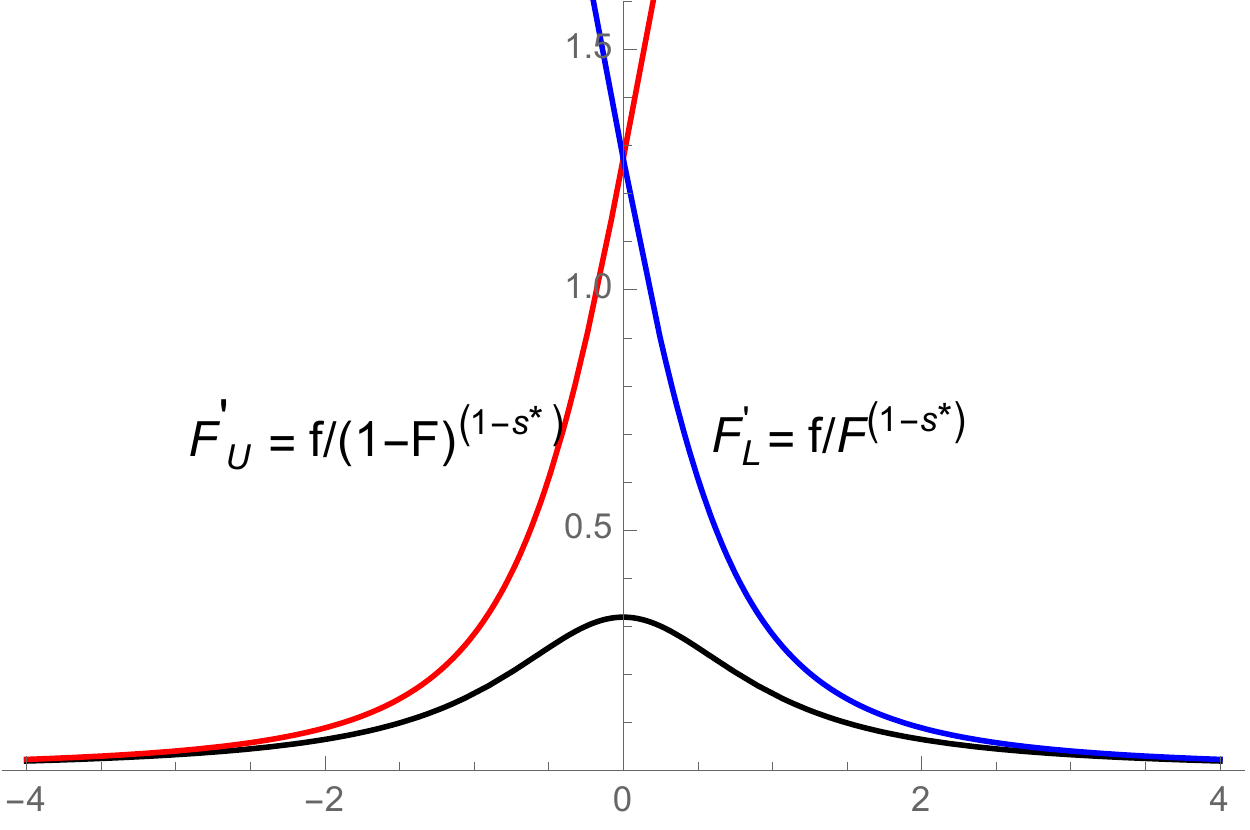} 
    \caption{The bi-$s^*$-concave $t_1$  density function $f$ (black) with its bi-$s^*$-concave  upper bounds 
    $F_U^{\prime}$ (red) and $F_L^{\prime}$ (blue) as given by (\ref{FprimeLowerSNeg}) and (\ref{FprimeUpperSNeg}).}
     \label{fig:fig2}
 \end{figure}
 
Upper and lower bounds for the derivative $f'$ of $f$ are given in (iv) of Theorem~\ref{thm:1s}:
These bounds are illustrated for the bi$-s^*-$concave distribution $t_1$ in Figure~\ref{fig:fig3}.
 
 \begin{figure}[ht]
    \centering
    \includegraphics[width=\linewidth,height=5.5cm,keepaspectratio]{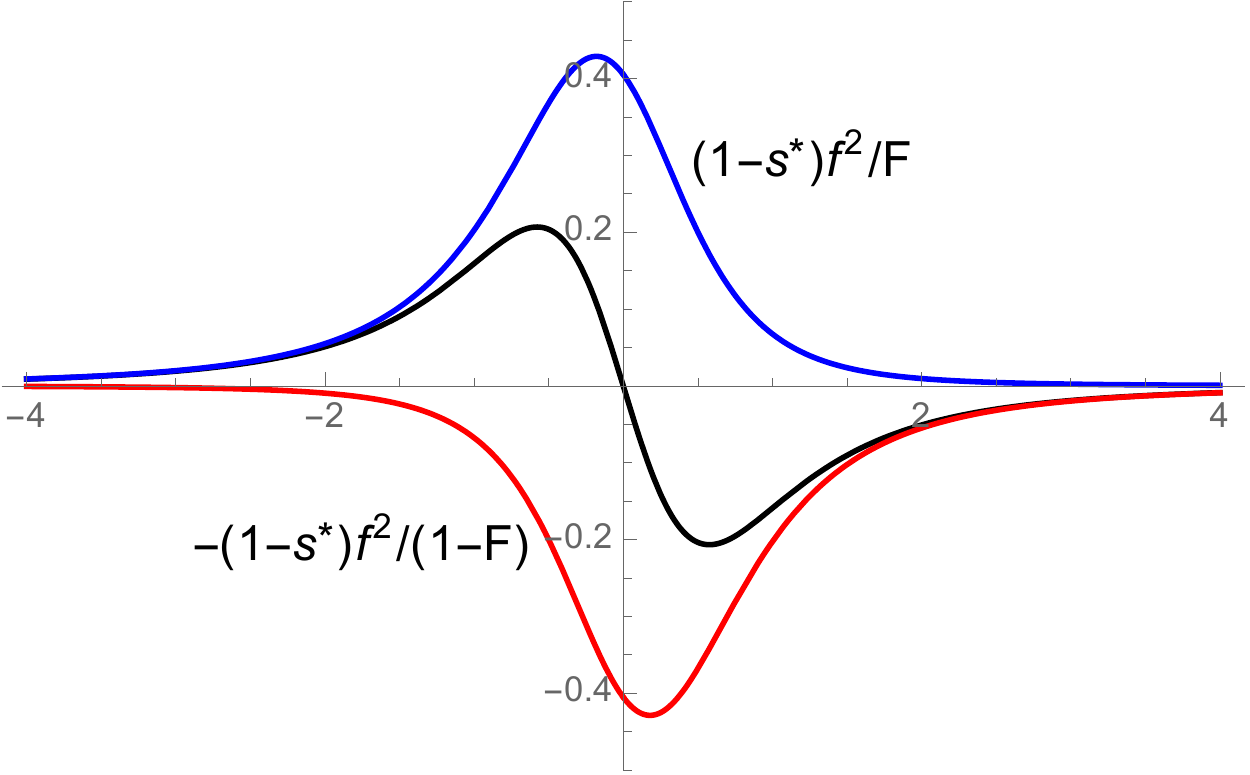} 
    \caption{The bi-$s^*$-concave $t_1$  density function derivative $f^{\prime}$ (black) with its bi-$s^*$-concave 
    lower (red) and upper (blue) bounds as given in (iv) of Theorem~\ref{thm:1s}.}
     \label{fig:fig3}
 \end{figure}
\medskip

\subsection{Bounds for $F$ bi-$s^*$-concave when $s>0$.}
\label{subsec:BoundsForSpos}

Upper and lower bounds on $F$:   Note that now $(1+y)^r \le 1+ ry$ for any $r \in (0,1]$ and $y \ge -1$ 
by concavity of $(1+y)^r$.
Taking $y = -F(x)$ and $r = s^*>0$  (since $s>0$) yields 
\begin{eqnarray*}
(1-F(x))^{s^*} \le 1- s^* F(x) . 
\end{eqnarray*}
By rearranging,
\begin{eqnarray}
F(x) & \le & \frac{1}{-s^*} \left \{ (1-F(x))^{s*} - 1 \right \} \nonumber \\
& = & \frac{1}{s^*} \left \{ 1- (1-F(x))^{s^*} \right \} \equiv F_{U,s} (x) \equiv F_U (x)
\label{FUpperSPos}
\end{eqnarray}
where $F_U$ is a convex function if $F$ is bi$-s^*-$concave.  
Similarly, taking $y= - (1-F(x))$ and $r=s^*$ yields, by rearranging terms
\begin{eqnarray}
F(x) \ge \frac{1}{s^*} \left \{ F(x)^{s^*} - (1-s^*) \right \} \equiv F_{L,s} (x) \equiv F_L (x) 
\label{FLowerSPos}
\end{eqnarray}
where $F_L$ is a concave function if $F$ is bi$-s^*-$concave.  
Note that 
\begin{eqnarray}
F_U^{\prime} (x) =  \frac{f(x)}{(1-F(x))^{1-s^*}}  = \frac{f(x)}{(1-F(x))^{1/(1+s)}} 
\label{FprimeUpperSPos}
\end{eqnarray}
is monotone non-decreasing, while
\begin{eqnarray}
F_L^{\prime} (x)  =  \frac{f(x)}{F^{1-s^*}(x)}  = \frac{f(x)}{F^{1/(1+s)} (x)}  
\label{FprimeLowerSPos}
\end{eqnarray}
is monotone non-increasing.  
  Therefore
\begin{eqnarray*}
&& 0 \le F_U^{\prime \prime} (x) =  (1-F(x))^{s^*-2} \left \{ (1-s^*) f^2 (x) + (1-F(x)) f^{\prime} (x) \right \}, \\
&& 0\ge F_L^{\prime \prime} (x)  =   F(x)^{s^*-2} \left \{ (s^*-1) f^2 (x) + F(x) f^{\prime}(x) \right \} . 
\end{eqnarray*}
Again note that the upper and lower bounds in (iv) of Theorem~\ref{thm:1s} follow by rearranging these inequalities. 

Taking $F$ to be the distribution function of  $g(\cdot, r)$ with $r=1$ as in 
Example~\ref{exmpl:ex4} and plotting the bounds  for $F$, $F^{\prime} = f$ 
and $F^{\prime \prime} = f^{\prime}$ yields the following three figures.  

\begin{figure}[ht]
    \centering
    \includegraphics[width=\linewidth,height=5.5cm,keepaspectratio]{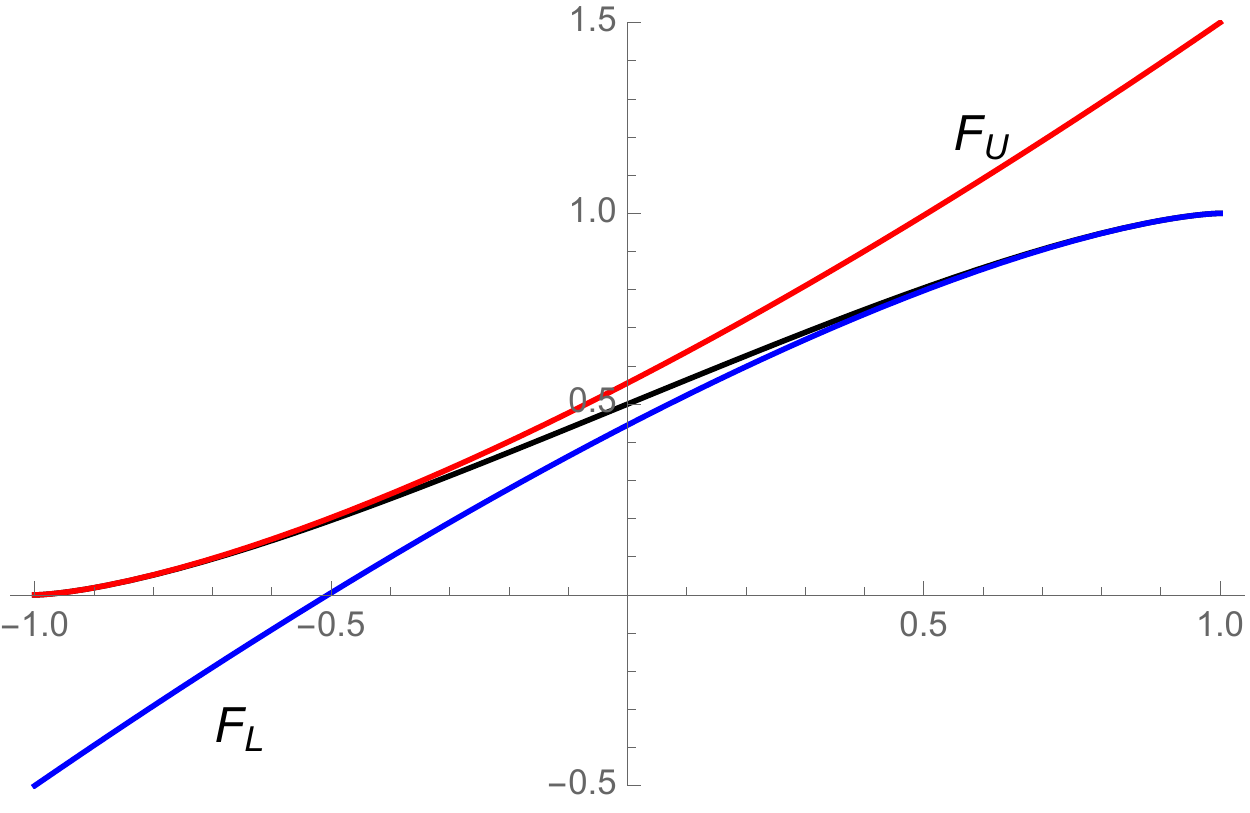}
    \caption{The bi-$s^*$-concave distribution function $F$ (black) corresponding to $g(\cdot; 1)$ of Example~\ref{exmpl:ex4}
    with its convex upper bound $F_U$ (red) and concave lower bound $F_L$ (blue)
    (where $F_U$ and $F_L$ are given in (\ref{FUpperSPos}) and (\ref{FLowerSPos})).}
     \label{fig:fig5}
 \end{figure}
Upper and lower bounds for the density $f = F^{\prime} $ of $F$ follow from (iii)  of Theorem~\ref{thm:1s}.
These bounds are illustrated for the bi-$s^*$-concave distribution $F$ 
corresponding to $g(\cdot; 1)$ of Example~\ref{exmpl:ex4}
in Figure~\ref{fig:fig6}.
 
\begin{figure}[ht]
    \centering 
   \includegraphics[width=\linewidth,height=5.5cm,keepaspectratio]{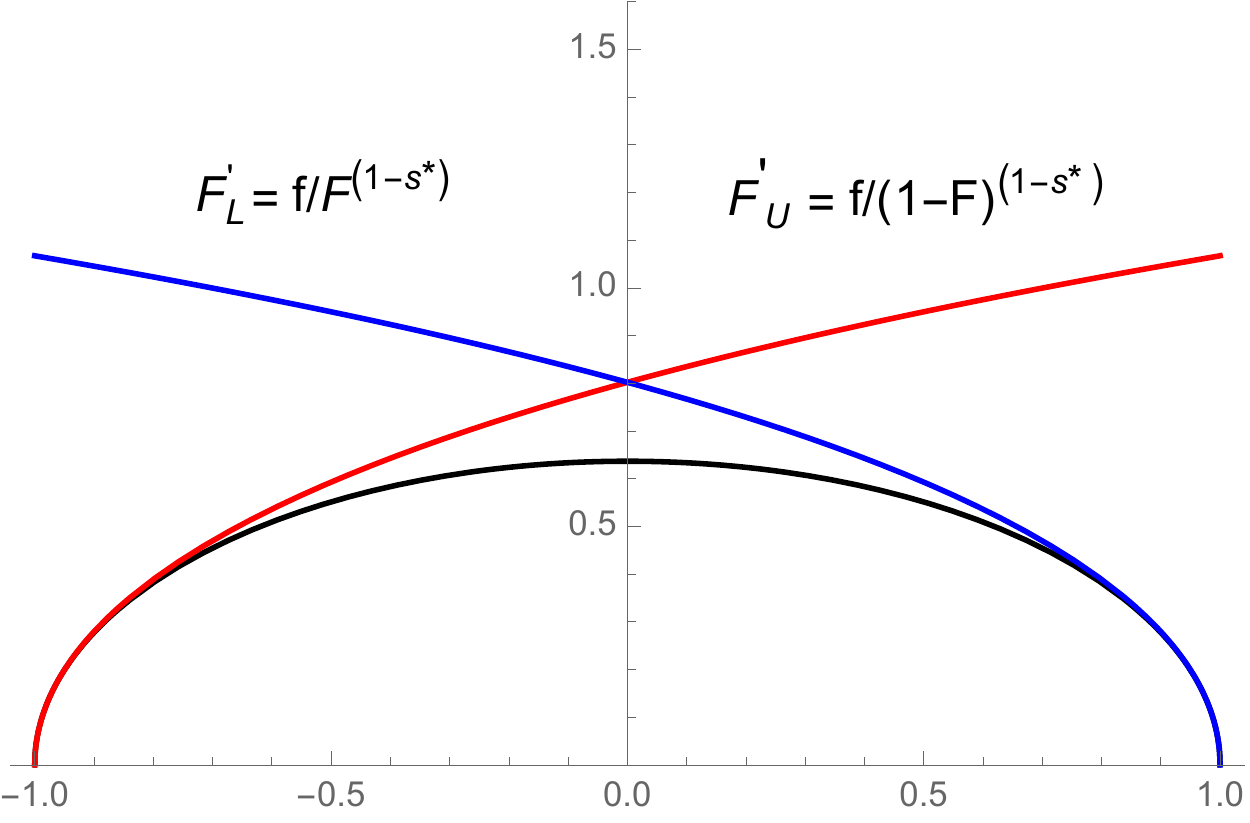} 
    \caption{The bi-$s^*$-concave density function $g(\cdot; 1)$ of Example~\ref{exmpl:ex4} (black) with its 
    bi-$s^*$-concave  upper bounds $F_L^{\prime}$ and $F_U^{\prime}$ given in (\ref{FprimeLowerSPos}) and (\ref{FprimeUpperSPos}).}
     \label{fig:fig6}
 \end{figure}

Upper and lower bounds for the derivative $f'$ of $f$ are given in (iv) of Theorem~\ref{thm:1s} 
These bounds are illustrated for the bi-$s^*$-concave distribution function $F$ with density 
$g(\cdot ; 1)$ as in Example~\ref{exmpl:ex4} in Figure~\ref{fig:fig7}.
 
 \begin{figure}[ht]
    \centering
   \includegraphics[width=\linewidth,height=5.5cm,keepaspectratio]{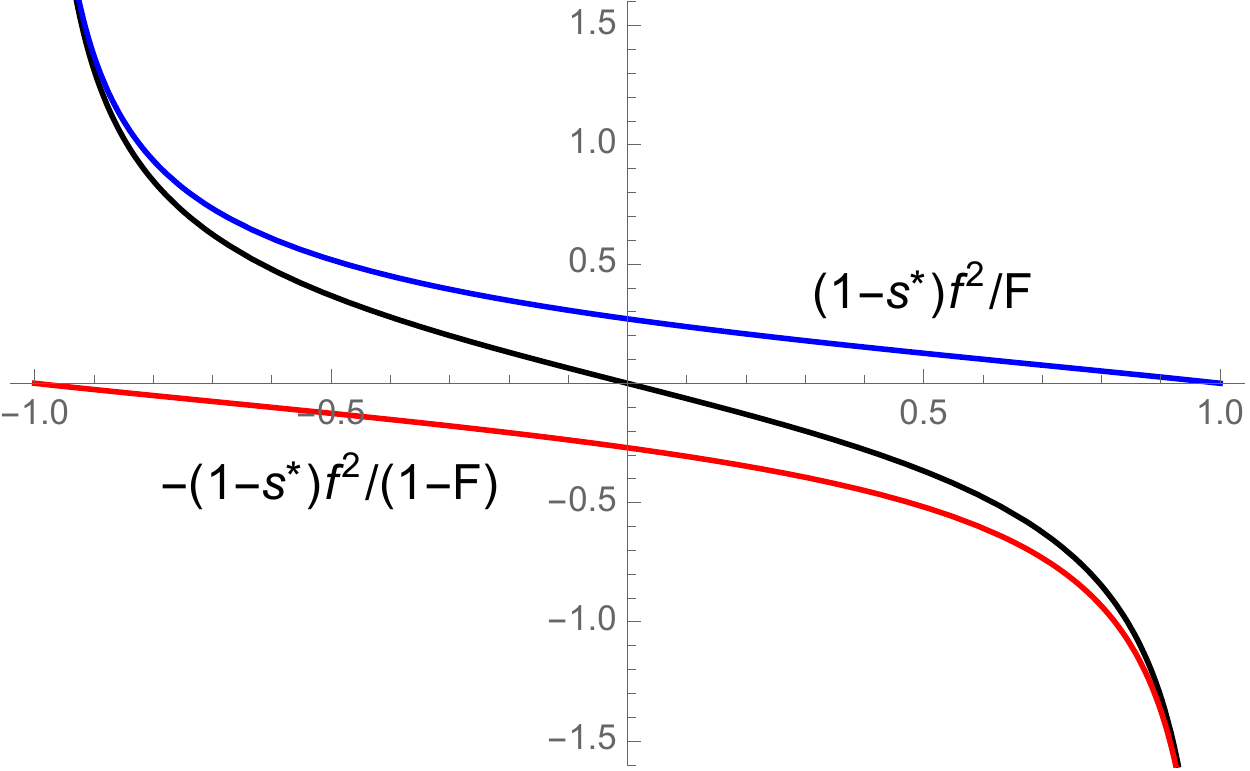} 
    \caption{$F^{\prime \prime} = f^{\prime}$ (black) for the bi-$s^*$-concave function $F$ corresponding to the 
    density $g(\cdot; 1)$ as in Example~\ref{exmpl:ex4}
    with its bi-$s^*$-concave 
     upper (blue) and lower (red) bounds  as given in (iv) of Theorem~\ref{thm:1s}.}
     \label{fig:fig7}
 \end{figure}
 \FloatBarrier
 
 \section{A consequence for Fisher information}
\label{sec:FisherInformation}
In this section we suppose that $F$ is a bi-$s^*$-concave distribution function with 
absolutely continuous density $f$ with respect to Lebesgue measure.   
Then from (\ref{4eq3}) of Theorem~\ref{thm:1s} it follows that 

\begin{eqnarray*}
\frac{| f' (x) |}{f(x)} \le \frac{1}{1+s} \frac{f(x)}{F(x) \wedge (1-F(x))}  \ \ \mbox{for all} \ \ x \in J(F),
\end{eqnarray*}
and hence that 
\begin{eqnarray}
\lefteqn{I_f \equiv \int_{\RR}  \left ( \frac{|f' (x)|}{f(x)} \right )^2 f(x) dx } \nonumber \\
&&  \le  \frac{1}{(1+s)^2}  \int_{\RR} \frac{f^2 (x)}{(F(x) \wedge (1-F(x)))^2} d F(x) \nonumber \\
&& \le  \frac{1}{(1+s)^2}  \left \{ \int_{\RR} \frac{f^2 (x)}{F^2(x)} dF(x) + \int_{\RR} \frac{f^2 (x)}{(1-F(x))^2} d F(x) \right \} \nonumber\\
&&  \le  \frac{2}{(1+s)^2} \max \left \{ \int_{\RR} \left ( \frac{f}{F} \right )^2 dF, \int_{\RR} \left ( \frac{f}{1-F} \right )^2 dF \right \} .
\label{FisherInformationUpperbounded}
\end{eqnarray}
But with $h = f'/f$, we find that 
$$
\int_{-\infty}^x  h dF = \int_{-\infty}^x (f'(y) / f(y) ) f(y) dy = f(x)  \ \ \mbox{and} \ \ \frac{f(x)}{F(x)} = \frac{\int_{-\infty}^x h dF}{F(x)} ,
$$
while 
$$
\int_x^{\infty} h dF = \int_x^{\infty} (f'/f) f dy = - f(x),  \ \ \mbox{and} \ \ \frac{-f(x)}{1-F(x)} = \frac{\int_x^\infty h dF }{1-F(x)} .
$$
Thus by the $L_2$ version of Hardy's inequality 
\begin{eqnarray}
\int_{\RR} \left ( \frac{f(x)}{F(x)} \right )^2 d F(x) 
& \le  &4  \int_{\RR} \left ( \frac{|f' (x)|}{f(x)} \right )^2 f(x) dx  = 4 I_f, \ \ \mbox{and}  \nonumber \\
              \int_{\RR} \left ( \frac{f(x)}{1-F(x)} \right )^2 d F(x) 
& \le &  4  \int_{\RR} \left ( \frac{|f' (x)|}{f(x)} \right )^2 f(x) dx  = 4 I_f .
\label{FisherInformationLowerBounded}
\end{eqnarray} 
Combining the inequalities in  (\ref{FisherInformationUpperbounded}) and (\ref{FisherInformationLowerBounded}) yields
\begin{eqnarray}
I_f & \le & \frac{2}{(1+s)^2} \max \left \{ \int_{\RR} \left ( \frac{f(x)}{F(x)} \right )^2 d F(x) , 
\ \int_{\RR} \left ( \frac{f(x)}{1-F(x)} \right )^2 d F(x) \right \}  \nonumber \\
& \le & \frac{8}{(1+s)^2} I_f .
\label{FisherInfoUpperAndLowerBounded}
\end{eqnarray}
But we note that the densities $f_r$ in Example 4 have
\begin{eqnarray*}
I_{f_r} = \frac{r}{2} \cdot \frac{\Gamma \left (\frac{r}{2}-1\right) 
\Gamma \left ( \frac{r+3}{2} \right )}{\Gamma \left ( \frac{r}{2} +1 \right )^2} \nearrow \infty
\end{eqnarray*}
as $r \searrow 2$, and $I_{f_r} = \infty$ for $0 < r \le 2$.  
In this latter case all the integrals in (\ref{FisherInfoUpperAndLowerBounded}) are infinite.

\section{Questions and further problems}
\label{sec:QuestAndProb}

\begin{question}
{\rm Application of bi-$s^*$-concavity to construction of confidence bands for $F$?}
\cite{DuembgenKW:2017}  use their bi-log-concave bounds to construct new confidence bands for bi-log-concave 
distribution functions $F$.  Alternative confidence bands based on the bi-$s^*$-concavity 
assumption may be of interest.
\end{question}
\begin{question}
{\rm What can be said when $s \le -1$?}  The only result we know in the direction of 
preserving $s-$concavity in the spirit of Borell, Brascamp and Lieb, and Rinott is due to 
\cite{MR572660}, but we do not have an interpretation of their result.  
We also do not know if there is an approximation of the general (standardized) quantile process $\QQ_n$ in terms of the uniform
quantile process $\VV_n$ in this case.
\end{question}
\begin{question}
{\rm Bi-log-concavity or bi-$s^*$-concavity in higher dimensions?}
Although log-concave (and $s-$concave) densities and measures on $\RR^d$ (and a variety of non-Euclidean spaces) exist,
we do not know of any analogue of bi$-s^*$concavity or bi-log-concavity in higher dimensions.
\end{question}
\begin{question}
{\rm Tranportation distances for empirical measures when $d\ge2$?}
The Cs\"org\H{o} - R\'ev\'esz condition has proved very useful for studying empirical transportation 
distances for empirical measures in one dimension, largely because of the connection with quantile 
processes.  We do not know of comparable theory for  transportation distances for empirical measures
in higher dimensional settings.
\end{question}

\section{Appendix:  proof of Theorem~\ref{thm:1s} when $s \in (0,\infty)$}
\label{sec:appendix}

\begin{proof} 
 Our proof of Theorem~\ref{thm:1s} for the case $s \in (0,\infty)$ closely parallels the proof 
for the case $s\in(-1,0]$.  The main difference is the proof of (iii) implies (iv).  
When $0 < s < \infty$, $s^* = s/(1+s) \in (0,1)$, and hence $1-2s^* < 0$ for $s>1$.  
This requires a slightly different argument in this range and results in different constants 
in the Lipschitz bounds. 

 Let us denote $\inf J(F)$ and $\sup J(F)$ by $a$ and $b$ respectively. Notice that $J(F)=(a,b)$ if $F$ is continuous.  

Proof of (i) implies (ii): 
Since $F$ is bi-$s^*$-concave with $\st>0$, $\psi=F^{\st}$ is concave on $(a,\infty)$. Consequently 
$\psi$, and hence $F$ also, is continuous on $(a,\infty)$ by  Lemma $6$ of \cite{DuembgenKW:2017}. 
  Similarly, the $s^*-$concavity of $1-F$ 
  implies continuity of $1-F$ on $(-\infty,b).$
 Now if $a=b$, $F$ would be degenerate. Hence, $a<b$ and $F$ is continuous on $\RR$. 
  Therefore we can also conclude that $J(F)=(a,b)$.  
  
Let $x\in(a,b)$. Concavity of $\psi$ implies that 
\begin{eqnarray*}
  F'(x\pm)=\lim_{t\to 0,\pm t>0}\dfrac{\psi^{1/s^*}(x+t)-\psi^{1/s^*}(t)}{t}=\dfrac{1}{s^*}\psi(x)^{1/s^*-1}\psi'(x\pm)
\end{eqnarray*}
  exist and satisfy
\begin{eqnarray*}
  F'(x+) \le F'(x-) .
\end{eqnarray*} 
Similarly, concavity of $(1-F)^{s^*}$ yields 
\begin{eqnarray*}
  (1-F)'(x-)\geq (1-F)'(x+)
\end{eqnarray*} 
  which implies that
\begin{eqnarray*}
  -F'(x-)\geq -F'(x+).
\end{eqnarray*}
Therefore $F'(x-)=F'(x+)$ which proves the differentiability of $F$. 
It also shows that $\psi'(x+)=\psi'(x-)=\psi'(x)$ on $(a,b)$.
 
By Lemma 6 of \cite{DuembgenKW:2017} for each $x\in(a,b)$ and $c\in[\psi'(x+),\psi'(x-)]$ one has
 \begin{eqnarray*}
    \psi(x+t) -\psi(x)\leq ct\ \text{ for }t\in(a-x,\infty)
 \end{eqnarray*}
 since $\psi$ is concave on $(a,\infty).$
    Therefore for such $x$ and $t,$
 \begin{eqnarray*}
     \psi(x+t)-\psi(x)\leq t \psi'(x).
 \end{eqnarray*} 
    Hence, 
 \begin{eqnarray*}
    F^{s^*}(x+t)-F^{s^*}(x)\leq ts^{*}f(x)F(x)^{s^*-1} 
 \end{eqnarray*}
    or, 
 \begin{eqnarray*}
    \dfrac{F^{s^*}(x+t)}{F^{s^*}(x)}\leq 1+s^*\dfrac{f(x)}{F(x)}t .
 \end{eqnarray*}
    Hence,
\begin{eqnarray*}
   \dfrac{F(x+t)}{F(x)}\leq\bigg ( 1+s^*\dfrac{f(x)}{F(x)}t\bigg )^{1/s^*}.
\end{eqnarray*}
     Analogously it follows that for $t\in(-\infty,b-x)$,
\begin{eqnarray*}
(1-F(x+t))^{s^*}-(1-F(x))^{s^*} \leq -ts^*f(x)(1-F(x))^{s^*-1}
\end{eqnarray*} 
   which yields
\begin{eqnarray*}   
  \bigg(\dfrac{1-F(x+t)}{1-F(x)}\bigg )^{s^*}\leq 1-ts^*\dfrac{f(x)}{1-F(x)}
\end{eqnarray*}
    or
 \begin{eqnarray*}
 F(x+t)\geq 1-(1-F(x))\cdot \bigg (1-ts^*\dfrac{f(x)}{1-F(x)}\bigg)^{1/s^*}.
 \end{eqnarray*}
    Hence \eqref{th12n} is proved. Notice that for $\st< 0$ the inequalities in \eqref{th12} 
    hold for all $t$ because if $\st<0$, unlike the present case, 
    $F^{\st}$ and $(1-F)^{\st}$ are convex on the entire real line.
  
Proof of (ii) implies (iii):  
Since
    (ii) holds, $F$ is continuous and differentiable on $J(F)$ with derivative $f=F'$ and satisfies \eqref{th12n}. 
    Now let $x,y\in J(F)$ with $x<y.$  Let 
\begin{eqnarray}
    h=f/F^{1-s^*}.
    \label{defhsp}
\end{eqnarray} 
 Then applying \eqref{th12n} we obtain that
\begin{eqnarray*}
    \dfrac{F^{s^*}(x)}{F^{s^*}(y)}\leq 1+s^*\dfrac{f(y)}{F(y)}(x-y).
\end{eqnarray*}
    Hence, 
\begin{eqnarray*}
    F^{s^*}(x)
    & \leq & F^{s^*}(y)+s^*\dfrac{f(y)}{F(y)^{1-s^*}}(x-y)\\
    & = & \ F^{s^*}(y)+s^*h(y)(x-y)\\
    &\leq & \ F^{s^*}(x)+s^*h(x)(y-x)+s^*h(y)(x-y).
\end{eqnarray*}
    Therefore 
\begin{eqnarray*}
    s^{*}(x-y)(h(y)-h(x))\geq 0
\end{eqnarray*} 
    where $s^{*}(x-y)<0$,  implying that $h(y)\leq h(x)$. 
    Therefore $h$ is non-increasing. Now let 
\begin{eqnarray}
    \tilde{h}=f/(1-F)^{1-s^{*}}.
     \label{defthsp}
\end{eqnarray} 
From \eqref{th12n} we also obtain that 
\begin{eqnarray*}
    (1-F(x))^{s^{*}}-(1-F(y))^{s^*}\leq -ts^{*}\dfrac{f(y)}{(1-F(y))^{1-s^*}}=-ts^{*}\tilde{h}(y)
\end{eqnarray*}
    or
\begin{eqnarray*}
  (1-F(x))^{s^{*}}
  & \leq & (1-F(y))^{s^*}  -(x-y)s^{*}\tilde{h}(y)\\
  & = & (1-F(x))^{s^*}-(y-x)s^{*}\tilde{h}(x)  -(x-y)s^{*}\tilde{h}(y)\\
  & =  & (1-F(x))^{s^*}-s^{*}(y-x)(\tilde{h}(y)-\tilde{h}(x)).\\
\end{eqnarray*}
    Since $s^{*}(y-x)>0,$ the last inequality leads to
\begin{eqnarray*}
    0\leq \tilde{h}(y)-\tilde{h}(x), 
\end{eqnarray*}  
    implying that $\tilde{h}$ is non-decreasing. 

Proof of (iii) implies (iv):
   If the conditions of (iii) hold, then it immediately follows that $f>0$ on $J(F)$. 
    If not, suppose that $f(x_0) = 0$ for some $x_0\in J(F)$ where $J(F)=(a,b)$ since $F$ is continuous.
    Then since $f(x)/F(x)^{1-s^*}$ is non-increasing, 
    $f(x)=0$ for  $x\in[x_0,b)$. Similarly since $f(x)/(1-F(x))^{1-s^*}$ is non-decreasing 
    we obtain $f(x)=0$ for $x\in(a, x_0]$.
    Therefore, $F'=0$  or $F$ is constant on $(a,b)$ or $J(F)$. Then $F$ violates the continuity 
    condition of (iii). Hence $f>0$ on $J(F)$.  
   
Suppose $h$ and $\tilde{h}$ are as defined in \eqref{defhsp} and \eqref{defthsp}. 
    Then the monotonicities of $h$ and $\tilde{h}$ imply that for any $x,x_0\in J(F),$
\begin{eqnarray*}
  f(x)= \left \{ \begin{array}{l l} 
      F^{1-s^*}(x)h(x)\leq h(x_0) &  \ \mbox{if} \ \  x\geq x_0,\\
      (1-F(x))^{1-s^*}\tilde{h}(x)\leq \tilde{h}(x_0) & \ \mbox{if} \ \  x\leq x_0.
      \end{array} \right .
\end{eqnarray*}
 Next, let $c,d \in J(F)$ with $c<d$. 
We will  bound $(f(y)-f(x))/(y-x)$ for $x,y\in J(F)$ such that $x,y\in(c,d)$ with $x \neq y$.
  This will yield local Lipschitz-continuity of $f$ on $J(F)$. 
  To this end, note that
\begin{eqnarray*}
 \dfrac{f(y)-f(x)}{y-x}
 & = & \ \dfrac{F^{1-s^*}(y)h(y)-F^{1-s^*}(x)h(x)}{y-x}\\
 & = & \  h(y)\dfrac{F^{1-s^*}(y)-F^{1-s^*}(x)}{y-x}+F^{1-s^*}(x)\dfrac{h(y)-h(x)}{y-x}\\
 & \leq & \  h(c)\dfrac{F^{1-s^*}(y)-F^{1-s^*}(x)}{y-x}\\
 & \to  & \  h(c) (1-s^*)f(x) F^{-s^*}(x)=(1-s^{*})h(c)h(x)F^{1-2s^{*}}(x)
\end{eqnarray*} 
as $y\to x$.  Here the inequality followed from the fact that 
\begin{eqnarray*}
\dfrac{h(y)-h(x)}{y-x}\leq 0
\end{eqnarray*} 
which holds since $h$ is non-increasing,
Now  since $h(x)\leq h(c)$, $s^*>0,$ $1- s^{*}>0$,  and $F(c)\leq F(x)\leq F(d)$, we find that
\begin{eqnarray}
\limsup_{y\to x}\dfrac{f(y)-f(x)}{y-x}\leq (1-s^*)h(c)^2F^{1-s^*}(d)F^{-s^*}(c)
\label{4eq1sp}
\end{eqnarray}
 for all $x\in(c,d)$.
Analogously with $\bar{F}=1-F$ we obtain that
\begin{eqnarray*}
\dfrac{f(y)-f(x)}{y-x}
& = & \dfrac{\bar{F}^{1-s^*}(y)\tilde{h}(y)-\bar{F}^{1-s^*}(x)\tilde{h}(x)}{y-x}\\
& = & \tilde{h}(y)\dfrac{\bar{F}^{1-s^*}(y)-\bar{F}^{1-s^*}(x)}{y-x}+\bar{F}^{1-s^*}(x)\dfrac{\tilde{h}(y)-\tilde{h}(x)}{y-x}\\
& \geq & \tilde{h}(y)\dfrac{\bar{F}^{1-s^*}(y)-\bar{F}^{1-s^*}(x)}{y-x}
\end{eqnarray*} 
since, by the non-decreasing property of $\tilde{h}$, for any $x,y\in J(F)$, 
\begin{eqnarray*}
   \dfrac{\tilde{h}(y)-\tilde{h}(x)}{y-x}>0.
\end{eqnarray*} 
Next observe that since $1-s^*=1/(1+s)>0$, and $\bar{F}(y)\leq \bar{F}(x)$ if $y\ge x$, 
\begin{eqnarray*}
\tilde{h}(y)\dfrac{\bar{F}^{1-s^*}(y)-\bar{F}^{1-s^*}(x)}{y-x}\geq \tilde{h}(d)\dfrac{\bar{F}^{1-s^*}(y)-\bar{F}^{1-s^*}(x)}{y-x}.
\end{eqnarray*}
Hence as $y\to x$ it follows that   
\begin{eqnarray*}
\liminf_{y\to x}\dfrac{f(y)-f(x)}{y-x}
& \geq & -\tilde{h}(d)(1-s^*)f(x)\bar{F}^{-s^*}(x)\\
& = & -\tilde{h}(d)\tilde{h}(x)(1-s^*)\bar{F}^{1-2s^*}(x).
\end{eqnarray*}
Therefore using the fact that $\tilde{h}(x)\leq \tilde{h}(d)$ and $1-s^*,s^*>0$ we conclude that 
\begin{eqnarray}
   \liminf_{y\to x}\dfrac{f(y)-f(x)}{y-x}\geq -\tilde{h}(d)^2(1-s^*)\bar{F}^{1-s^*}(c)\bar{F}^{-s^*}(d).
\label{4eq2sp}
\end{eqnarray}
Combining the above with \eqref{4eq1sp} we find that $f$ is Lipschitz-continuous on $(c,d)$ with Lipschitz-constant 
\begin{eqnarray*}
   \max\{(1-s^*)h(c)^2F^{1-s^*}(d)F^{-s^*}(c),(1-s^*)\tilde{h}(d)^2\bar{F}^{1-s^*}(c)\bar{F}^{-s^*}(d)\}.
\end{eqnarray*} 
This proves that $f$ is locally Lipschitz continuous on $J(F)$. 
Hence, $f$ is also locally absolutely continuous with $L^1$-derivative $f'$ such that
\begin{eqnarray*}
    f(y)-f(x)=\int_{x}^{y}f'(t)dt\ \text{  for all }x,y \in J(F);
\end{eqnarray*} 
hence $f'(x)$ can be chosen so that
\begin{eqnarray*}
    f'(x)\in\bigg[\liminf_{y\to x}\dfrac{f(y)-f(x)}{y-x},\limsup_{y\to x}\dfrac{f(y)-f(x)}{y-x}\bigg].
\end{eqnarray*} 
However \eqref{4eq1sp} and \eqref{4eq2sp} imply that for $c<x<d$,
\begin{eqnarray*}
  \lefteqn{
   \bigg[\liminf_{y\to x}\dfrac{f(y)-f(x)}{y-x},\limsup_{y\to x}\dfrac{f(y)-f(x)}{y-x}\bigg]}\\
&& \subset \bigg[-(1-s^*)\tilde{h}(d)^2\bar{F}^{1-s^*}(c)F^{-s^*}(d),(1-s^*)h(c)^2F^{1-s^*}(d)F^{-s^*}(c)\bigg]
\end{eqnarray*} 
Now since $f$ and $F$ are continuous and $F>0$ on $J(F),$ so are $h$ and $\tilde{h}$. 
Therefore,  letting $c,d\to x$ it follows that 
\begin{eqnarray*}
\dfrac{-(1-s^*)f(x)^2\bar{F}^{1-2s^*}(x)}{\bar{F}^{2-2s^*}(x)}\leq f'(x)\leq (1-s^*)\dfrac{f(x)^2F^{1-2s^*}(x)}{F^{2-2s^*}(x)};
\end{eqnarray*}
and this implies \eqref{4eq3}.

Proof of (iv) implies (i):
 Notice that the fact that (iii) implies (i) can be easily verified  since $f/F^{1-s^*}$ non-increasing on 
 $J(F)$ implies that   $F^{s^*}$ is concave on $J(F).$ Since $F$ is continuous, $J(F)=(a,b)$.  
 Now $F^{s^*}\in(0,1)$ on $J(F)$ and $F^{s^*}(x)=1$ for $x\geq b$. 
 Therefore $F^{s^*}$ is concave on $(a,\infty)$.
 Similarly one can show that $(1-F)^{s^*}$ is concave on $(-\infty,b)$.
 Therefore $F$ is bi-$s^*$-concave. Therefore it is enough to prove that (iv) implies (iii).

 By Lemma $7$ of \cite{DuembgenKW:2017} $h$ is non-increasing on $J(F)$ 
 if and only if for any $x\in J(F)$ the following holds:
 \begin{eqnarray*}
 \limsup_{y\to x}\dfrac{h(y)-h(x)}{y-x}\leq 0.
 \end{eqnarray*}
 Suppose $x\neq y\in J(F)$ and $r:=\min(x,y)$ and $s:=\max(x,y).$ Then it follows that
 \begin{eqnarray*}
\lefteqn{\dfrac{h(y)-h(x)}{y-x} =  \dfrac{f(y)/F^{1-s^*}(y)-f(x)/F^{1-s^*}(x)}{y-x}} \\
     && = \dfrac{1}{F^{1-s^*}(y)}\dfrac{f(y)-f(x)}{y-x}
                 -\dfrac{f(x)}{F^{1-s^*}(x)F^{1-s^*}(y)}\dfrac{F^{1-s^*}(y)-F^{1-s^*}(x)}{y-x}\\
     && = \dfrac{1}{F^{1-s^*}(y)}\dfrac{\int_{r}^{s}f'(t)dt}{s-r}
                 -\dfrac{f(x)}{F^{1-s^*}(x)F^{1-s^*}(y)}\dfrac{F^{1-s^*}(y)-F^{1-s^*}(x)}{y-x}\\
     && \leq  \dfrac{(1-s^*)}{F^{1-s^*}(y)(s-r)}\int_{r}^{s}\dfrac{f(t)^2}{F(t)}dt
                 -\dfrac{f(x)}{F^{1-s^*}(x)F^{1-s^*}(y)}\dfrac{F^{1-s^*}(y)-F^{1-s^*}(x)}{y-x}
 \end{eqnarray*}
  by \eqref{4eq3}. 
  Since $F$ is continuous by (iv), $J(F)=(a,b)$. 
  Also since $x,y\in J(F),$ $[r,s]\subset J(F)$.  Since $f$ and $F$ are continuous on 
  $J(F)$ and $F>0$ on $J(F)$,  $f^2/F$ is continuous and integrable on $J(F)$ and hence also on $[r,s]$.    
  Letting $y\to x$ we obtain that
 \begin{eqnarray*}
     \limsup_{y\to x}\dfrac{h(y)-h(x)}{y-x}\leq \dfrac{(1-s^*)f(x)^2}{F^{2-s^*}(x)}-\dfrac{(1-s^*)f(x)^2}{F^{2-s^*}(x)}=0 .
 \end{eqnarray*}
 Analogously, by Lemma $7$ of \cite{DuembgenKW:2017}, to show $\tilde{h}$ is non-decreasing it is enough to show that
 \begin{eqnarray*}
 \liminf_{y\to x}\dfrac{\tilde{h}(y)-\tilde{h}(x)}{y-x}\geq 0.
 \end{eqnarray*}
 To verify this suppose $x\neq y\in J(F)$ and $r:=\min(x,y)$ and $s:=\max(x,y)$. 
 As before we calculate
 \begin{eqnarray*}
    \dfrac{\tilde{h}(y)-\tilde{h}(x)}{y-x}
 & = &\  \dfrac{f(y)/\bar{F}^{1-s^*}(y)-f(x)/\bar{F}^{1-s^*}(x)}{y-x}\\
 & = &\  \dfrac{1}{F^{1-s^*}(y)}\dfrac{f(y)-f(x)}{y-x}-\dfrac{f(x)}{\bar{F}^{1-s^*}(x)\bar{F}^{1-s^*}(y)}
              \dfrac{\bar{F}^{1-s^*}(y)-\bar{F}^{1-s^*}(x)}{y-x}\\
 & = &\  \dfrac{1}{\bar{F}^{1-s^*}(y)}\dfrac{\int_{r}^{s}f'(t)dt}{s-r}-\dfrac{f(x)}{\bar{F}^{1-s^*}(x)\bar{F}^{1-s^*}(y)}
              \dfrac{\bar{F}^{1-s^*}(y)-\bar{F}^{1-s^*}(x)}{y-x}\\
 & \geq & \  - \dfrac{(1-s^*)}{\bar{F}^{1-s^*}(y)(s-r)}\int_{r}^{s}\dfrac{f(t)^2}{\bar{F}(t)}dt 
            -\dfrac{f(x)}{\bar{F}^{1-s^*}(x)\bar{F}^{1-s^*}(y)}\dfrac{\bar{F}^{1-s^*}(y)-\bar{F}^{1-s^*}(x)}{y-x}
 \end{eqnarray*}
 by \eqref{4eq3}. Since $f$ and $\bar{F}$ are continuous on $J(F)$,  letting $y\to x$ it follows that
\begin{eqnarray*}
    \liminf_{y\to x}\dfrac{\tilde{h}(y)-\tilde{h}(x)}{y-x} 
       \geq  -\dfrac{(1-s^*)f(x)^2}{\bar{F}^{2-s^*}(x)}+\dfrac{(1-s^*)f(x)^2}{\bar{F}^{2-s^*}(x)}=0 .
\end{eqnarray*}
\end{proof}
\medskip

\par\noindent
\section*{Acknowledgement}  
We owe thanks to Lutz D\"umbgen for pointing out a simpler proof of Proposition~\ref{prop:BagnoliBergstrom}
(not given here) and for noting several typos.

\bibliographystyle{natbib}
\bibliography{BiS-concave}

\end{document}